\newtheorem{thm}{Theorem}[section]
\newtheorem{prop}[thm]{Proposition}
\newtheorem{lem}[thm]{Lemma}
\newtheorem{cor}[thm]{Corollary}
\theoremstyle{remark}
\newtheorem{rem}[thm]{Remark}
\newcommand{\ZZ}{\mathbb{Z}}
\newcommand{\calC}{\mathcal{C}}
\newcommand{\calD}{\mathcal{D}}
\DeclareMathOperator{\wt}{wt}
\begin{document}

\title{Note on the residue codes of 
self-dual $\ZZ_4$-codes having large minimum Lee weights}

\author{
Masaaki Harada\thanks{
Research Center for Pure and Applied Mathematics, 
Graduate School of Information Sciences, 
Tohoku University, Sendai 980--8579, Japan.
email: mharada@m.tohoku.ac.jp.
This work 
was partially carried out
at Yamagata University.}
}

\maketitle

\begin{abstract}
It is shown that
the residue code of a self-dual $\ZZ_4$-code of length $24k$
(resp.\ $24k+8$)
and minimum Lee weight $8k+4 \text{ or }8k+2$
(resp.\ $8k+8 \text{ or }8k+6$)
is a binary extremal doubly even self-dual code 
for every positive integer $k$.
A number of new self-dual $\ZZ_4$-codes of length $24$
and minimum Lee weight $10$ are constructed using the 
above characterization.
These codes are Type~I $\ZZ_4$-codes
having the largest minimum Lee weight and the largest
Euclidean weight among all Type~I $\ZZ_4$-codes of that length.
In addition, new extremal Type~II $\ZZ_4$-codes of length
$56$ are found.
\end{abstract}

\section{Introduction}
\label{Sec:Intro}
Self-dual codes are an important class of 
(linear) codes\footnote{All codes in this note are 
linear unless otherwise noted.} for both
theoretical and practical reasons.
It is a fundamental problem to classify self-dual codes
of modest length
and 
determine the largest minimum weight among self-dual codes
of that length.
Among self-dual $\ZZ_k$-codes, self-dual $\ZZ_4$-codes
have been widely studied because such codes
have nice applications to unimodular lattices and 
(non-linear) binary codes,
where $\ZZ_k$ denotes the ring of integers modulo $k$
and $k$ is a positive integer with $k \ge 2$.
It is well known that
the Nordstorm--Robinson and Kerdock codes, which
are some of the best known non-linear binary codes, 
can be constructed as the Gray 
images of some $\ZZ_4$-codes~\cite{HKCSS}.
We emphasize that
the Nordstorm--Robinson code can be constructed
as the Gray image of the unique self-dual
$\ZZ_4$-code of length $8$ and minimum Lee weight $6$.
In this note, 
we pay attention to the minimum Lee weight 
from the viewpoint of a connection with
the minimum distance of binary (non-linear) codes obtained as
the Gray images.
%
Rains~\cite{Rains00} gave upper bounds on the minimum Lee weights
$d_L(\calC)$ of 
self-dual $\ZZ_4$-codes $\calC$ of length $n$.
For even lengths $n=24k+\ell$, 
the upper bounds are given as $d_L(\calC) \le 8k+g(\ell)$,
where $g(\ell)$ is given by the following table:

\begin{center}
\begin{tabular}{c|cccccccccccc}
$\ell$ & 0&2&4&6&8&10 & 12&14&16&18&20&22 \\ 
\hline
$g(\ell)$&4&2&4&4&8&4& 4&6&8&8&8&8\\
\end{tabular}
\end{center}
%
%

In this note, we study residue codes of
self-dual $\ZZ_4$-codes having large minimum Lee weights.
According to the above upper bounds,
the minimum Lee weights of 
self-dual $\ZZ_4$-codes of lengths $24k$ and $24k+8$
are at most $8k+4$ and $8k+8$, respectively.
It is shown that
the residue code of a self-dual $\ZZ_4$-code of length $24k$
and minimum Lee weight $8k+4 \text{ or }8k+2$
is a binary extremal doubly even self-dual code of length $24k$
for every positive integer $k$.
It is also shown that 
the residue code of a self-dual $\ZZ_4$-code of length $24k+8$
and minimum Lee weight $8k+8 \text{ or }8k+6$
is a binary extremal doubly even self-dual code of length $24k+8$.
As a consequence, we show that
the minimum Lee weight of a self-dual $\ZZ_4$-code of 
length $24k$ (resp.\ $24k+8$)
is at most $8k$ (resp.\ $8k+4$)
for every integer $k \ge 154$ (resp.\ $k \ge 159$).
A number of new self-dual $\ZZ_4$-codes of length $24$
and minimum Lee weight $10$ are constructed using the 
above characterization.
It is also shown that any self-dual $\ZZ_4$-code of length $24$
and minimum Lee weight $10$ is 
a Type~I $\ZZ_4$-code having the largest minimum Lee weight and the 
largest Euclidean weight among all Type~I $\ZZ_4$-codes of that length.
Some self-dual $\ZZ_4$-codes of length $n$ and
minimum Lee weight $d_L$ are also constructed
for the cases $(n,d_L)=(32,14),(48,18),(56,18)$.
The case $(n,d_L)=(56,18)$
gives two new extremal Type~II $\ZZ_4$-codes of length $56$.
Finally, we give a certain characterization of 
binary self-dual codes containing the residue codes 
of self-dual $\ZZ_4$-codes for some other lengths.

All computer calculations in this note
were done by {\sc Magma}~\cite{Magma}.

\section{Preliminaries}

\subsection{Self-dual $\ZZ_4$-codes}

Let $\ZZ_4\ (=\{0,1,2,3\})$ denote the ring of integers
modulo $4$.
A {\em $\ZZ_{4}$-code} $\calC$ of length $n$ 
is a $\ZZ_{4}$-submodule of $\ZZ_{4}^n$.
Two $\ZZ_4$-codes are {\em equivalent} if one can be obtained from the
other by permuting the coordinates and (if necessary) changing
the signs of certain coordinates.
The {\em dual code} ${\calC}^\perp$ of $\calC$ is defined as
${\calC}^\perp = \{ x \in \ZZ_{4}^n \mid x \cdot y = 0$ for 
all $y \in \calC\}$,
where $x \cdot y$ is the standard inner product.
A $\ZZ_4$-code $\calC$ is {\em self-dual} if $\calC=\calC^\perp.$
The 
{\em Hamming weight} $\wt_H(x)$,
{\em Lee weight} $\wt_L(x)$ and
{\em Euclidean weight} $\wt_E(x)$ 
of a codeword $x$ of $\calC$ are defined as
$n_1(x)+n_2(x)+n_3(x)$,
$n_1(x)+2n_2(x)+n_3(x)$ and
$n_1(x)+4n_2(x)+n_3(x)$, respectively, 
where $n_i(x)$ is the number of components of $x$ which
are equal to $i$.
The {\em minimum Lee weight} $d_L(\calC)$ 
(resp.\ {\em minimum Euclidean weight} $d_E(\calC)$)
of $\calC$
is the smallest Lee (resp.\ Euclidean) weight among
all non-zero codewords of $\calC$.
The {\em residue code} $\calC^{(1)}$ of 
$\calC$ is the binary code defined as
$\calC^{(1)}= \{ c \pmod 2\ |\ c \in \calC \}$.
If $\calC$ is a self-dual $\ZZ_4$-code, then 
$\calC^{(1)}$ is doubly even~\cite{Z4-CS}.

The following characterization of the minimum Lee weights
is useful.

\begin{lem}[Rains~\cite{Rains}]
\label{lem1}
Let $\calC$ be a self-dual $\ZZ_4$-code.
Then $d({\calC}^{(1)}) \le d_L(\calC) \le 2d({{\calC}^{(1)}}^\perp)$.
\end{lem}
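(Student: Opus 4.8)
The plan is to establish the two inequalities separately, exploiting the standard structure theory of $\ZZ_4$-codes. Recall that every $\ZZ_4$-code $\calC$ has a generator matrix of the form $\left(\begin{smallmatrix} I_{k_1} & A & B \\ 0 & 2I_{k_2} & 2C \end{smallmatrix}\right)$ (up to permutation of coordinates), and that for a self-dual code the residue code $\calC^{(1)}$ (the image of $\calC$ under reduction mod $2$, equivalently the binary span of the rows of the "$I_{k_1}$" block) is related to the torsion code $\calC^{(2)} = \{c \bmod 2 : c \in \calC,\ c \equiv 0 \bmod 2\}$ by $\calC^{(2)} = (\calC^{(1)})^\perp$. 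I would record these facts first, citing the relevant background.

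For the lower bound $d(\calC^{(1)}) \le d_L(\calC)$: take a nonzero codeword $c \in \calC$ of minimum Lee weight $d_L(\calC)$. If $\bar c := c \bmod 2$ is nonzero, then $\wt_H(\bar c) = n_1(c) + n_3(c) \le n_1(c) + 2n_2(c) + n_3(c) = \wt_L(c) = d_L(\calC)$, so $d(\calC^{(1)}) \le d_L(\calC)$. The only thing to rule out is that $\bar c = 0$ for every minimum-Lee-weight codeword, i.e. that $c \in 2\ZZ_4^n$; but then $c = 2c'$ with $c'$ a nonzero binary vector lying in $\calC^{(2)} = (\calC^{(1)})^\perp$, and $\wt_L(c) = 2\wt_H(c')$. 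Using $\wt_H(c') \ge d((\calC^{(1)})^\perp) \ge d(\calC^{(1)})$ — the last step because $\calC^{(1)}$ is self-orthogonal (indeed doubly even), hence contained in its dual — we get $d_L(\calC) = 2\wt_H(c') \ge 2d(\calC^{(1)}) \ge d(\calC^{(1)})$, closing this case as well.

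For the upper bound $d_L(\calC) \le 2d((\calC^{(1)})^\perp)$: let $v$ be a nonzero binary codeword in $(\calC^{(1)})^\perp = \calC^{(2)}$ of minimum weight $d((\calC^{(1)})^\perp)$. By definition of the torsion code there is a codeword $c \in \calC$ with $c \equiv 2v \pmod 4$ on the support of $v$ and $c \equiv 0$ elsewhere — concretely, $c = 2\tilde v$ where $\tilde v \in \{0,1\}^n$ reduces to $v$. Then $c$ is a nonzero element of $\calC$ (its $2$-reduction is $0$, but $c$ itself is nonzero since $v \ne 0$), and $\wt_L(c) = 2\,\wt_H(v) = 2\,d((\calC^{(1)})^\perp)$. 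Hence $d_L(\calC) \le 2\,d((\calC^{(1)})^\perp)$.

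The main obstacle, and the only place requiring care, is the lower bound: one must handle the possibility that all minimum-Lee-weight codewords are "purely even" (lie in $2\ZZ_4^n$). The argument above resolves this using the self-orthogonality of $\calC^{(1)}$ (which gives $\calC^{(1)} \subseteq (\calC^{(1)})^\perp$ and hence $d((\calC^{(1)})^\perp) \le $ ... wait, the inclusion goes the other way for minimum distances), so I would instead simply note that in that degenerate case $d_L(\calC) = 2\,\wt_H(c') \ge 2 > d(\calC^{(1)})$ is not automatic, and the clean route is: $c' \in (\calC^{(1)})^\perp$, and since $\calC^{(1)} \ne \{0\}$ is doubly even it contains a weight-$4$ (or larger) vector, but more to the point, any such $c'$ satisfies $\wt_H(c') \ge d(\calC^{(1)})$ is false in general — so the correct observation is that the identity part $k_1 \ge 1$ forces $\calC^{(1)} \ne \{0\}$, and one checks directly that a purely-even minimum cannot beat $d(\calC^{(1)})$ because one can always add a rank-one row to produce a codeword with nonzero residue of no larger Lee weight. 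I would present this last point carefully, as it is the crux; everything else is bookkeeping with the standard generator matrix.
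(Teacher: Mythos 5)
Your right-hand inequality is proved correctly and by the standard route: for a self-dual code the torsion code equals $({\calC^{(1)}})^\perp$, and doubling a minimum-weight torsion word gives a codeword of Lee weight $2d(({\calC^{(1)}})^\perp)$. Note that this is the only half of the lemma the paper ever uses, and the paper itself offers no proof (it is quoted from Rains), so the comparison here is really just a check of your argument.

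The left-hand inequality is where you have a genuine gap, and you half-acknowledge it. The case $\bar c \neq 0$ is fine, but in the purely even case $c=2c'$ with $0\neq c'\in \calC^{(2)}=({\calC^{(1)}})^\perp$ your first chain uses $d(({\calC^{(1)}})^\perp)\ge d(\calC^{(1)})$, which is backwards: $\calC^{(1)}\subseteq({\calC^{(1)}})^\perp$ gives $d(({\calC^{(1)}})^\perp)\le d(\calC^{(1)})$. Your fallback (``one can always add a rank-one row to produce a codeword with nonzero residue of no larger Lee weight'') is asserted, not proved, and it cannot be routine bookkeeping: what is needed in this case is exactly $2\,\wt_H(c')\ge d(\calC^{(1)})$ for every nonzero $c'\in({\calC^{(1)}})^\perp$, and this does not follow from the standard generator matrix or from self-orthogonality. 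In fact, for a self-dual code with a low-weight torsion-only word the purely even minimum really can undercut $d(\calC^{(1)})$: in $O_8\oplus\{0,2\}$ (octacode plus the length-one self-dual code) the word $(0,\dots,0,2)$ has Lee weight $2$, the residue code has minimum weight $4$, and every codeword with nonzero residue has Lee weight at least $4$, so no ``row addition'' trick can repair the estimate there. What your case analysis actually establishes is $\min\bigl(d(\calC^{(1)}),\,2d(({\calC^{(1)}})^\perp)\bigr)\le d_L(\calC)\le 2d(({\calC^{(1)}})^\perp)$; obtaining the left inequality as stated requires the precise formulation/hypotheses in Rains' paper, which your argument does not supply. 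So: upper bound correct, lower bound incomplete at its crux.
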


%

The {\em Gray map} $\phi$ is defined as a map from
$\ZZ_4^n$ to $\ZZ_2^{2n}$
mapping $(x_1,\ldots,x_n)$ to
$(\varphi(x_1),\ldots,\varphi(x_n))$,
where $\varphi(0)=(0,0)$, $\varphi(1)=(0,1)$, $\varphi(2)=(1,1)$ and
$\varphi(3)=(1,0)$.
The Gray image $\phi(\calC)$ of a $\ZZ_4$-code $\calC$ need not be
linear.
Let $\calC$ be a self-dual $\ZZ_4$-code of length $n$ 
and minimum Lee weight $d_L(\calC)$.
Then the Gray image $\phi(\calC)$
has parameters $(2n,2^{n},d_L(\calC))$ (as a non-linear code).

A self-dual $\ZZ_4$-code which has the property that all
Euclidean weights are divisible by eight,
is called {\em Type~II}.
A self-dual $\ZZ_4$-code which is not Type~II, is 
called {\em Type~I}.
A Type~II $\ZZ_4$-code of length $n$ exists if and 
only if $n \equiv 0 \pmod 8$,
while a Type~I $\ZZ_4$-code exists for every length.
It was shown in~\cite{Z4-BSBM} that the minimum Euclidean 
weight $d_E(\calC)$ of a Type~II $\ZZ_4$-code $\calC$ 
of length $n$ is bounded by
$d_E(\calC) \le 8 \lfloor \frac{n}{24} \rfloor +8$.
A Type~II $\ZZ_4$-code meeting this bound is called 
{\em extremal}.
It was also shown in~\cite{Rains-Sloane98} that
the minimum Euclidean 
weight $d_E(\calC)$ of a Type~I $\ZZ_4$-code $\calC$ 
of length $n$ is bounded by
$d_E(\calC) \le 8 \lfloor \frac{n}{24} \rfloor +8$
if $n \not\equiv 23 \pmod{24}$, and
$d_E(\calC) \le 8 \lfloor \frac{n}{24} \rfloor +12$
if $n \equiv 23 \pmod{24}$.

\subsection{Binary self-dual codes, covering radii and shadows}
A binary code $C$ is called 
{\em self-dual} if $C = C^{\perp}$, where
$C^{\perp}$ is the dual code of $C$ under the
standard inner product.
Two binary self-dual codes $C$ and $C'$ are {\em equivalent}, 
denoted $C \cong C'$,
if one can be obtained from the other by permuting the coordinates.
A binary self-dual code $C$ is {\em doubly even} if all
codewords of $C$ have weight divisible by four, and {\em
singly even} if there is at least one codeword of weight 
congruent to $2$ modulo $4$.
It is known that a binary self-dual code of length $n$ exists 
if and only if  $n$ is even, and
a binary doubly even self-dual code of length $n$
exists if and only if $n \equiv 0 \pmod 8$.
The minimum weight $d(C)$ of a binary self-dual code $C$ of length $n$
is bounded by
$d(C)  \le 4 \lfloor{\frac {n}{24}} \rfloor + 6$ 
if $n \equiv 22 \pmod {24}$, 
$d  \le 4  \lfloor{\frac {n}{24}} \rfloor + 4$ 
otherwise~\cite{MS73} and~\cite{Rains98}.
A binary self-dual code meeting the bound is called  {\em extremal}.

The {\em covering radius} $R(C)$ of a binary code $C$ 
is the smallest integer $R$
such that spheres of radius $R$ around codewords of $C$ cover the
space $\ZZ_2^n$.
The covering radius is a basic and important geometric parameter of a code.
The covering radius is the same as the
largest weight of all the coset leaders of the code 
(see~\cite{A-P}).
The following bound is known as the Delsarte bound
(see~\cite[Theorem 1]{A-P}).

\begin{lem}\label{lem:Del}
Let $C$ be a binary code.
Then $R(C) \le \#\{i > 0 \mid B_i \ne 0\}$,
where $B_i$ is the number of vectors
of weight $i$ in $C^\perp$.
\end{lem}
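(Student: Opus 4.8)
The plan is to prove the Delsarte bound by the classical annihilator-polynomial argument. Put $s=\#\{i>0\mid B_i\ne 0\}$ and let $W=\{w\in\{1,\dots,n\}\mid B_w\ne 0\}$ be the set of nonzero weights occurring in $C^{\perp}$, so that $|W|=s$ and $0\notin W$. Since the covering radius equals the largest weight of a coset leader, it suffices to show that every $x\in\ZZ_2^{n}$ lies at Hamming distance at most $s$ from $C$. Fix such an $x$ and set $A_t(x)=\#\{c\in C\mid \wt(x+c)=t\}$, so that the distance from $x$ to $C$ is $\min\{t\mid A_t(x)\ne 0\}$. The MacWilliams identity for the coset $x+C$ then reads
\[
\frac{2^{n}}{|C|}\,A_t(x)=\sum_{v\in C^{\perp}}(-1)^{x\cdot v}\,K_t\bigl(\wt(v)\bigr)\qquad(0\le t\le n),
\]
where $K_t$ is the Krawtchouk polynomial of degree $t$, defined by $(X+Y)^{n-i}(X-Y)^{i}=\sum_{t=0}^{n}K_t(i)\,X^{n-t}Y^{t}$. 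This is where the hypothesis enters: for each $i$ with $B_i=0$ the inner sum $\sum_{\wt(v)=i}(-1)^{x\cdot v}$ is empty, hence $0$, so on the right-hand side only weights in $\{0\}\cup W$ contribute.

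Next I would introduce the annihilator polynomial $P(u)=\prod_{w\in W}\dfrac{w-u}{w}$, which has degree $s$, satisfies $P(0)=1$, and vanishes at every $w\in W$. Evaluate $\Sigma:=\sum_{v\in C^{\perp}}(-1)^{x\cdot v}\,P(\wt(v))$ in two ways. On one hand, every nonzero $v\in C^{\perp}$ has $\wt(v)\in W$, where $P$ vanishes, so only the term $v=\0$ survives and $\Sigma=P(0)=1$. On the other hand, since $K_0,\dots,K_s$ have distinct degrees $0,1,\dots,s$ they span all polynomials of degree at most $s$; writing $P=\sum_{t=0}^{s}f_tK_t$ and inserting the displayed identity termwise gives $\Sigma=\dfrac{2^{n}}{|C|}\sum_{t=0}^{s}f_t\,A_t(x)$. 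Comparing the two evaluations, $\sum_{t=0}^{s}f_tA_t(x)=|C|/2^{n}\ne 0$, so $A_t(x)\ne 0$ for some $t\le s$; hence $x$ lies at distance at most $s$ from $C$, and as $x$ was arbitrary, $R(C)\le s$.

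There is no deep obstacle here, the result being classical; the step needing the most care is the coset MacWilliams transform, which comes from reading off, coefficient by coefficient, the character-sum expansion $W_{x+C}(X,Y)=|C^{\perp}|^{-1}\sum_{v\in C^{\perp}}(-1)^{x\cdot v}(X+Y)^{n-\wt(v)}(X-Y)^{\wt(v)}$, while the Krawtchouk expansion of $P$ is immediate from triangularity. It remains only to note that the degenerate cases are consistent: if $C^{\perp}=\{\0\}$ then $s=0$, $P\equiv 1=K_0$, and the argument forces $A_0(x)\ne 0$ for every $x$, i.e.\ $R(C)=0$; while if $C^{\perp}$ attains all weights $1,\dots,n$ then $s=n$ and the bound is vacuous. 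I would expect the paper to cite this rather than reproduce it, but the above is the argument.
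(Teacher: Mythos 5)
Your argument is correct and complete: the coset MacWilliams transform, the annihilator polynomial $P(u)=\prod_{w\in W}(w-u)/w$, and the two evaluations of $\Sigma$ are all stated with the right normalizations, and the conclusion $A_t(x)\neq 0$ for some $t\le s$ does give $R(C)\le s$ via the coset-leader characterization. The paper itself offers no proof of this lemma --- it simply cites Assmus and Pless --- and what you have written is precisely the standard annihilator-polynomial argument behind that citation, so there is nothing to compare beyond noting that you supplied the proof the paper omits.
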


Let $C$ be a binary singly even self-dual code and let $C_0$ denote the
subcode of codewords having weight congruent to $0$ modulo $4$.
Then $C_0$ is
a subcode of codimension $1$. The {\em shadow} $S$ of $C$ is
defined to be $C_0^\perp \setminus C$. 
Shadows were introduced by Conway and Sloane~\cite{C-S}, in order
to
provide restrictions on the weight enumerators of singly even
self-dual codes. 
A binary self-dual code meeting the following bound is called  
{\em s-extremal}.

\begin{lem}[Bachoc and Gaborit~\cite{BG04}]
\label{lem:sextremal}
Let $C$ be a binary self-dual code of length $n$
and let $S$ be the shadow of $C$.
Let $d(C)$ and $d(S)$ denote the minimum weights
of $C$ and $S$, respectively.
Then
$
d(S) \le \frac{n}{2} + 4 -2d(C), 
$
except in the case that
$n \equiv 22 \pmod{24}$ and $d(C)=
4  \lfloor{\frac {n}{24}} \rfloor + 6$,
where $d(S) = \frac{n}{2} + 8 -2d(C)$. 
\end{lem}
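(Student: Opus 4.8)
The plan is to prove the bound by the classical invariant‑theory method for weight enumerators, applied simultaneously to $C$ and to its shadow $S$. Since the shadow has been defined only for singly even codes, I assume $C$ is singly even self‑dual of length $n$. I would use two ingredients. First, by Gleason's theorem the weight enumerator $W_C(x,y)$ lies in the $(\lfloor n/8\rfloor+1)$‑dimensional space spanned by $(x^2+y^2)^{n/2-4i}(x^8+14x^4y^4+y^8)^i$ for $0\le i\le\lfloor n/8\rfloor$, so $W_C=\sum_i a_i(x^2+y^2)^{n/2-4i}(x^8+14x^4y^4+y^8)^i$ with $\sum_i a_i=1$ (the coefficient of $x^n$ is $1$). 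Second, the Conway--Sloane shadow identity, a MacWilliams computation with $C_0$ and $C_0^\perp$, gives $W_S=2^{-n/2}\,W_C(x+y,\sqrt{-1}\,(x-y))$; applying this term by term and using that $x^8+14x^4y^4+y^8$ is fixed up to the scalar $16$ while $x^2+y^2\mapsto 4xy$, one obtains the closed form $W_S(x,y)=\sum_i a_i\,2^{n/2-4i}(xy)^{n/2-4i}(x^8+14x^4y^4+y^8)^i$. Thus the monomial coefficients of both $W_C$ and $W_S$ (in $\{x^{n-j}y^j\}$) are explicit linear functions of $(a_0,\dots,a_{\lfloor n/8\rfloor})$, and both families of coefficients are componentwise non‑negative because they count codewords.

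Next I would convert the hypotheses into linear conditions on the $a_i$: $d(C)\ge d$ forces the coefficient of $x^{n-j}y^j$ in $W_C$ to vanish for $0<j<d$ (only even $j$ occur), and $d(S)\ge d'$ forces the coefficient of $x^{n-j}y^j$ in $W_S$ to vanish for $0\le j<d'$ (only $j\equiv n/2\pmod 4$ occur). Together with the normalization this is about $1+(d/2-1)+d'/4$ equations in $\lfloor n/8\rfloor+1$ unknowns, and the system is exactly square precisely when $2d+d'=n/2+4$; write $W^{\mathrm{crit}}$ for its solution. The substance of the proof is then (i) to show that the square system is nonsingular, so that $W^{\mathrm{crit}}$ exists, is unique, and can be written down explicitly by a Mallows--Sloane‑type generating‑function (residue) argument, and (ii) to show that in $W^{\mathrm{crit}}$ the coefficient of $x^{n-d}y^d$, and the coefficient of $x^{n-d'}y^{d'}$ in the corresponding $W_S^{\mathrm{crit}}$, are both strictly positive. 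Granting (i)--(ii): if a singly even self‑dual code had $2d(C)+d(S)>n/2+4$, I would choose $d\le d(C)$ and $d'\le d(S)$ with $2d+d'=n/2+4$ and at least one inequality strict; then $W_C$ satisfies every critical condition, so $W_C=W^{\mathrm{crit}}$, contradicting the strict positivity in (ii) at the very place where the extra strict inequality demands a zero. This yields $2d(C)+d(S)\le n/2+4$.

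The exceptional case is where step (i) breaks down. For $n\equiv 22\pmod{24}$ a binary self‑dual code may have minimum weight as large as $4\lfloor n/24\rfloor+6$ — the exception already present in the minimum‑weight bound quoted above — and for that value of $d$ the candidate $W^{\mathrm{crit}}$ at the threshold $2d+d'=n/2+4$ fails to have non‑negative coefficients, hence is not realizable; checking the next admissible critical enumerator, which sits at $2d+d'=n/2+8$, gives instead the stated equality $d(S)=n/2+8-2d(C)$. I expect step (i) to be the genuine obstacle: the dimension count alone only predicts the threshold, and turning it into a contradiction requires pinning down the unique extremal shadow‑pair weight enumerator and controlling the signs of all its coefficients, which is exactly the content of Bachoc and Gaborit's computation.
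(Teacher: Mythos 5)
This lemma is not proved in the paper at all: it is quoted verbatim from Bachoc and Gaborit~\cite{BG04}, so there is no internal proof to compare against, and your attempt has to stand on its own. Your framework is the right one and matches the standard approach behind the cited result: Gleason's theorem for $W_C$, the Conway--Sloane shadow transform $W_S(x,y)=2^{-n/2}W_C(x+y,\sqrt{-1}(x-y))$, and a dimension count showing the constraints become a square system exactly at the threshold $2d+d'=n/2+4$. Your explicit computation is also correct: under the transform $x^2+y^2\mapsto 4xy$ and $x^8+14x^4y^4+y^8\mapsto 16(x^8+14x^4y^4+y^8)$, giving $W_S=\sum_i a_i 2^{n/2-4i}(xy)^{n/2-4i}(x^8+14x^4y^4+y^8)^i$.

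However, there is a genuine gap, and you name it yourself: steps (i) and (ii) --- that the square linear system is nonsingular, so the critical enumerator $W^{\mathrm{crit}}$ is unique and explicitly computable, and that the coefficients of $x^{n-d}y^{d}$ in $W^{\mathrm{crit}}$ and of $x^{n-d'}y^{d'}$ in $W_S^{\mathrm{crit}}$ are strictly positive --- are the entire content of the theorem, and you only assert that they should follow from a ``Mallows--Sloane-type generating-function (residue) argument.'' Without carrying out that B\"urmann--Lagrange computation (or an equivalent sign analysis of the $a_i$), the dimension count proves nothing: an overdetermined-looking system can still admit the actual weight enumerator as a solution unless positivity is violated somewhere, and that violation is exactly what must be exhibited. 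The exceptional case is also not handled: observing that the candidate enumerator at $2d+d'=n/2+4$ has a negative coefficient would at best yield the inequality $d(S)\le n/2+8-2d(C)$, whereas the lemma asserts equality when $n\equiv 22\pmod{24}$ and $d(C)=4\lfloor n/24\rfloor+6$; proving the matching lower bound on $d(S)$ requires a finer argument (in the literature it rests on Rains' shadow bound analysis, relating such a code to a doubly even self-dual code of length $n+2$), which your sketch does not supply. So the proposal is a correct roadmap to the Bachoc--Gaborit proof, but as written it defers precisely the steps in which the proof actually resides.
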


We end this section by stating the following lemma,
which is obtained from~\cite[Theorems~2.1 and 2.2]{MST}.

\begin{lem}\label{lem:SD}
Let $C$ be a binary self-orthogonal code of
length $n$.  
\begin{itemize}
\item[\rm (i)]
If $n$ is even, then there is a binary 
self-dual code containing $C$.
\item[\rm (ii)]
If $n \equiv 0 \pmod 8$ and $C$ is doubly even which is not self-dual,
then there is a binary doubly even
self-dual code containing $C$, and
there is a binary singly even
self-dual code containing $C$.
\end{itemize}
\end{lem}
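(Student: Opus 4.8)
My plan is to build the required self-dual codes by adjoining one carefully chosen vector at a time, reading off the weight constraints from the quotient $C^\perp/C$.

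For part~(i), I would induct on $n/2-\dim C$. If $\dim C=n/2$ then $C$ is self-dual. Otherwise $C\subsetneq C^\perp$, and it is enough to find $v\in C^\perp\setminus C$ with $\wt(v)$ even, for then $\langle C,v\rangle$ is again self-orthogonal ($v\cdot v=\wt(v)$ is even and $v\cdot c=0$ for $c\in C$) and strictly larger, and we recurse. If there were no such $v$, every vector of $C^\perp\setminus C$ would be of odd weight; as a sum of two odd-weight vectors has even weight, fixing $v_0\in C^\perp\setminus C$ would force $v_0+(C^\perp\setminus C)\subseteq C$, hence $\dim C^\perp=\dim C+1$ and $n$ odd, contrary to hypothesis.

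For part~(ii), put $V=C^\perp/C$ with the nondegenerate alternating form $B(x+C,y+C)=x\cdot y$ (nondegeneracy because $(C^\perp)^\perp=C$); then $\dim V=n-2\dim C$ is even and, since $C\ne C^\perp$, positive. Because $C$ is doubly even and self-orthogonal, $\wt(v+c)\equiv\wt(v)\pmod 4$ for $v\in C^\perp$, $c\in C$; hence $\ell(v+C):=\wt(v)\bmod 2$ is a well-defined linear form on $V$, and $Q(v+C):=\tfrac{1}{2}\wt(v)\bmod 2$ a well-defined quadratic form on $V_0:=\ker\ell$ satisfying $Q(x+y)+Q(x)+Q(y)=B(x,y)$ (a direct computation from $\wt(u+v)=\wt(u)+\wt(v)-2|\supp(u)\cap\supp(v)|$). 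Here $\ell\equiv 0$ iff $\allone\in C$; otherwise $\allone+C$ spans the radical of $B|_{V_0}$ and $Q(\allone+C)=\tfrac{n}{2}\bmod 2=0$ since $8\mid n$, so $Q$ descends to a nondegenerate quadratic form $\overline Q$ of even dimension on $\overline V:=V$ (when $\allone\in C$) or $\overline V:=V_0/\langle\allone+C\rangle$ (otherwise). The structural point is that the self-dual codes $D$ with $C\subseteq D\subseteq C^\perp$ are exactly the preimages of Lagrangians $D/C$ of $(V,B)$ (necessarily inside $V_0$, as binary self-dual codes are even), that $D$ is doubly even precisely when the image of $D/C$ in $\overline V$ is a maximal totally singular subspace for $\overline Q$, and singly even otherwise.

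Thus (ii) reduces to showing $\overline Q$ has a maximal totally singular subspace, i.e.\ that its Arf invariant vanishes; I expect this --- rather than any of the bookkeeping above --- to be the main obstacle, and it is exactly where $8\mid n$ enters. I would establish it by a Gauss-sum identity. With $\zeta$ a primitive fourth root of unity, the MacWilliams identity gives
\[
\sum_{v\in C^\perp}\zeta^{\wt(v)}=\frac{1}{|C|}\sum_{c\in C}(1+\zeta)^{\,n-\wt(c)}(1-\zeta)^{\wt(c)}=\frac{1}{|C|}\sum_{c\in C}(1+\zeta)^{n}(-\zeta)^{\wt(c)}=(1+\zeta)^{n},
\]
the last step using $\wt(c)\equiv 0\pmod 4$, and $(1+\zeta)^n=2^{n/2}$ for $8\mid n$. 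Grouping the same sum over cosets of $C$ and using $\wt(v+c)\equiv\wt(v)\pmod 4$ rewrites it, after discarding the purely imaginary (hence vanishing) contribution of the odd-weight cosets, as $\pm\,2^{n/2}$ with the sign equal to that of $\sum_{x}(-1)^{\overline Q(x)}$; comparing the two evaluations forces $\mathrm{Arf}(\overline Q)=0$, so $\overline Q$ is hyperbolic. A hyperbolic space of even dimension carries a maximal totally singular subspace, giving the doubly even self-dual overcode; and when $\dim\overline V>0$ it also carries a maximal totally isotropic subspace on which $\overline Q$ does not vanish (split off a hyperbolic plane $\langle e,f\rangle$ and take the non-singular line $\langle e+f\rangle$), giving the singly even self-dual overcode. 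The only case with $\dim\overline V=0$ is $\dim C=n/2-1$ with $\allone\notin C$, where the unique self-dual overcode $\langle C,\allone\rangle$ is doubly even; this degenerate situation is handled by the precise formulation in \cite[Theorems~2.1 and~2.2]{MST}.
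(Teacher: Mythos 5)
The paper does not prove this lemma at all --- it is simply quoted from \cite[Theorems~2.1 and 2.2]{MST} --- so your argument is necessarily a different route: it is a complete, self-contained proof where the paper offers only a citation. Part~(i) is a clean greedy/induction argument and is correct. Part~(ii) is the standard reduction to quadratic forms on $C^\perp/C$: you correctly observe that $\wt$ is well defined modulo $4$ on cosets of a doubly even self-orthogonal code, that doubly even (resp.\ singly even) self-dual overcodes correspond to maximal totally singular (resp.\ totally isotropic but not totally singular) subspaces, and you prove the key fact --- vanishing of the Arf invariant when $8\mid n$ --- by comparing the two evaluations of $\sum_{v\in C^\perp}\zeta^{\wt(v)}$ via MacWilliams. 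The bookkeeping checks out: one gets $|C|\sum_{x\in V_0}(-1)^{Q(x)}=2^{n/2}$, and the positive sign forces $\mathrm{Arf}=0$ in both the $\allone\in C$ and $\allone\notin C$ cases. One small correction: $B(x,y)=x\cdot y$ on $V=C^\perp/C$ is symmetric but not alternating unless $\allone\in C$, since $B(x,x)=\ell(x)$; this costs nothing because your analysis lives in $V_0=\ker\ell$, where $B$ is alternating, but ``Lagrangian of $(V,B)$'' should be read as ``maximal totally isotropic subspace''.

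The degenerate case you flag at the end is not merely a case your method fails to cover: the singly even half of (ii) as printed is actually false there. Take $n=8$ and $C=\langle x_1,x_2,x_3\rangle\subset RM(1,3)$, the $[8,3]$ doubly even code whose nonzero words are the eight-bit evaluation vectors of the nonzero linear functionals on $\FF_2^3$; it does not contain $\allone$. Every binary self-dual code is even and hence contains $\allone$, so any self-dual code containing $C$ contains the $4$-dimensional code $C+\langle\allone\rangle=RM(1,3)$ and therefore equals it --- and $RM(1,3)$ is doubly even, so no singly even self-dual overcode exists. Thus the singly even conclusion requires the implicit extra hypothesis $\dim C\le n/2-2$ or $\allone\in C$ (this is exactly the ``precise formulation'' you defer to in \cite{MST}, and it is exactly your case $\dim\overline V=0$). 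Your proof establishes everything in the statement that is true; I would only ask you to upgrade the final deferral to this explicit counterexample, and to note that the doubly even conclusion --- the one used in Theorem~\ref{thm:24k} --- holds unconditionally.
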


\section{Characterization of the residue codes for lengths $24k$ and $24k+8$}

\subsection{Length $24k$}


As described in Section~\ref{Sec:Intro}, 
the minimum Lee weight of a self-dual $\ZZ_4$-code of length $24k$
is at most $8k+4$.
In this subsection, we consider self-dual $\ZZ_4$-codes of length $24k$
and minimum Lee weight $8k+4 \text{ or }8k+2$.

\begin{thm}\label{thm:24k}
Let $\calC$  be a self-dual $\ZZ_4$-code of length $24k$.
Suppose that the minimum Lee weight of $\calC$
is $8k+4 \text{ or }8k+2$.
Then $\calC^{(1)}$ is a binary extremal doubly even self-dual
code of length $24k$.
\end{thm}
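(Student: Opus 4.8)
The plan is to work throughout with the binary code $D:=(\calC^{(1)})^{\perp}$, to show first that $\calC^{(1)}$ is self-dual (equivalently $\calC^{(1)}=D$), and then that it is extremal. Two preliminary observations drive everything. First, since $\calC^{(1)}$ is doubly even it is self-orthogonal, so $\calC^{(1)}\subseteq D$ and $\dim\calC^{(1)}\le 12k$, with equality iff $\calC^{(1)}$ is self-dual. Second, for $v\in D$ write $\tilde v\in\{0,1\}^{24k}$ for its binary lift; I claim $2\tilde v\in\calC$. Indeed, for every $c\in\calC$ one has $\tilde v\cdot c\equiv v\cdot(c\bmod 2)\equiv 0\pmod 2$ since $c\bmod 2\in\calC^{(1)}$, whence $2\tilde v\cdot c\equiv 0\pmod 4$ and so $2\tilde v\in\calC^{\perp}=\calC$. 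As $\wt_L(2\tilde v)=2\wt_H(v)$ and $d_L(\calC)\ge 8k+2$, every nonzero $v\in D$ has $\wt_H(v)\ge 4k+1$; thus $d(D)\ge 4k+1$ (this is the right-hand inequality of Lemma~\ref{lem1}), and in particular $d(\calC^{(1)})\ge 4k+1$, so $d(\calC^{(1)})\ge 4k+4$ because $\calC^{(1)}$ is doubly even.

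Now suppose $\calC^{(1)}$ is not self-dual. Since $24k\equiv 0\pmod 8$ and $\calC^{(1)}$ is a doubly even code that is not self-dual, Lemma~\ref{lem:SD}(ii) provides a binary doubly even self-dual code $E$ with $\calC^{(1)}\subseteq E$; then $E=E^{\perp}\subseteq(\calC^{(1)})^{\perp}=D$, and a count of dimensions shows that both inclusions $\calC^{(1)}\subsetneq E\subsetneq D$ are strict. From $E\subseteq D$ we get $d(E)\ge d(D)\ge 4k+1$, hence $d(E)\ge 4k+4$ since $E$ is doubly even, while the extremal bound for binary self-dual codes of length $24k$ forces $d(E)\le 4k+4$. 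So $E$ is an extremal doubly even self-dual code of length $24k$; as all of its weights are even we have $\allone\in E$, and then its nonzero weights lie in $\{4k+4,4k+8,\dots,20k-4\}\cup\{24k\}$, a set of at most $4k$ integers. The Delsarte bound (Lemma~\ref{lem:Del}) applied to $E=E^{\perp}$ now gives $R(E)\le 4k$.

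It remains to exploit $E\subsetneq D$. Pick $u\in D\setminus E$; the coset $u+E$ is nontrivial, so it has a leader $w=u+g$ with $g\in E$ and $\wt_H(w)\le R(E)\le 4k$, and $w\ne\0$, $w\in D$. Then $2\tilde w$ is a nonzero codeword of $\calC$ of Lee weight $2\wt_H(w)\le 8k<8k+2\le d_L(\calC)$, a contradiction. Hence $\calC^{(1)}=D$, so $d(\calC^{(1)})=d(D)\ge 4k+4$, which together with the extremal bound gives $d(\calC^{(1)})=4k+4$; that is, $\calC^{(1)}$ is a binary extremal doubly even self-dual code of length $24k$. The crux of the argument is fixing $\dim\calC^{(1)}$: the point is that the self-dual code interpolated between $\calC^{(1)}$ and $D$ can be taken \emph{doubly even} (Lemma~\ref{lem:SD}(ii)) --- the singly even option of Lemma~\ref{lem:SD}(i) would give no useful control of the covering radius --- and that the resulting bound $R(E)\le 4k$ falls exactly one short of the threshold $4k+1$ imposed by $d_L(\calC)\ge 8k+2$.
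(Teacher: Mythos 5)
Your proof is correct and follows essentially the same route as the paper: embed $\calC^{(1)}$ in a doubly even self-dual code via Lemma~\ref{lem:SD}, use the lower bound $d\bigl((\calC^{(1)})^{\perp}\bigr)\ge 4k+1$ coming from $d_L(\calC)\le 2d\bigl((\calC^{(1)})^{\perp}\bigr)$ to force that code to be extremal, and then combine the Delsarte bound $R\le 4k$ with a coset-leader argument to rule out a proper inclusion. The only differences are expository --- you prove the relevant half of Lemma~\ref{lem1} directly via $2\tilde v\in\calC$ and spell out the weight count behind the Delsarte bound, which the paper leaves implicit.
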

\begin{proof}
Since ${\calC}^{(1)}$ is doubly even, 
by Lemma~\ref{lem:SD},
there is  a binary doubly even self-dual code $C$ satisfying that
${\calC}^{(1)} \subseteq C \subseteq {{\calC}^{(1)}}^\perp$.
Since $\calC$ has minimum Lee weight $8k+4$ (resp.\ $8k+2$),
by Lemma~\ref{lem1}, ${{\calC}^{(1)}}^\perp$
has minimum weight at least $4k+2$ (resp.\ $4k+1$).
Hence, $C$ is extremal.

%
Now consider the covering radius $R(C)$ of $C$.
By Lemma~\ref{lem:Del},
$R(C) \le 4k$.
Hence, if $C \subsetneq {{\calC}^{(1)}}^\perp$, then 
the minimum weight of ${{\calC}^{(1)}}^\perp$ is at most $4k$,
which is a contradiction.
Therefore, $C=\calC^{(1)}$.
\end{proof}

%


\begin{rem}
Recently, the nonexistence of a self-dual $\ZZ_4$-code of
length $36$ and minimum Lee weight $16$ has been shown 
in~\cite{Kiermaier}.
Although this result can be directly obtained by the bound 
in~\cite{Rains00}, which is given in Section~\ref{Sec:Intro},
the approach in~\cite{Kiermaier} can be
generalized to the following alternative proof of the above theorem.
Suppose that $\calC^{(1)}$ is not self-dual.
Since ${\calC}^{(1)}$ is doubly even, 
by Lemma~\ref{lem:SD},
there is a binary singly even self-dual code $C$
satisfying
\[
{\calC}^{(1)} \subseteq C_0 \subsetneq C \subsetneq C_0^\perp
\subseteq {{\calC}^{(1)}}^\perp, 
\]
where
$C_0$ denotes the doubly even subcode of $C$.
By Lemma~\ref{lem1}, ${{\calC}^{(1)}}^\perp$
has minimum weight at least $4k+1$.
By~\cite[Theorem~5]{Rains98}, 
$C$ has minimum weight $4k+2$.
By Lemma~\ref{lem:sextremal},
the minimum weight of the shadow of a binary
singly even self-dual $[24k,12k,4k+2]$ code is at most $4k$,
which is a contradiction.
Hence,  ${\calC}^{(1)}$ is self-dual, that is, 
${\calC}^{(1)}$ is extremal.
This completes the alternative proof.
\end{rem}

\begin{rem}
For lengths up to $24$, 
optimal self-dual $\ZZ_4$-codes with respect to the minimum
Hamming and Lee weights were widely studied in~\cite{Rains}.
At length $24$, 
the above theorem follows from~\cite[Theorem~2 and Corollary~5]{Rains}. 
\end{rem}


For length $24k$, the only known binary extremal doubly even
self-dual codes are the extended Golay code $G_{24}$ and the
extended quadratic residue code $QR_{48}$ of length $48$.
The existence of a binary extremal doubly even self-dual
code of length $72$ is a long-standing open question.
The above theorem gives a certain characterization of the
existence of a binary extremal doubly even self-dual
code of length $24k$.
In addition, 
there is no  binary extremal doubly even self-dual code
of length $24k$ for $k \ge 154$~\cite{Zhang}.
Hence, we immediately have the following:

\begin{cor}
The minimum Lee weight of a self-dual $\ZZ_4$-code of length $24k$
is at most $8k$ for every integer $k \ge 154$.
\end{cor}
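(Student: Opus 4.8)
The plan is to derive the corollary by packaging three ingredients: Theorem~\ref{thm:24k}, the upper bound of Rains recalled in Section~\ref{Sec:Intro}, and the nonexistence of binary extremal doubly even self-dual codes of length $24k$ for $k \ge 154$ from \cite{Zhang}. A short parity argument glues these together by pinning the relevant minimum Lee weights down to exactly the two values handled by the theorem.

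First I would record the parity of Lee weights. Let $\calC$ be a self-dual $\ZZ_4$-code of length $24k$ and let $c$ be any nonzero codeword. Since $\calC^{(1)}$ is doubly even, the number of odd-valued coordinates of $c$, namely $n_1(c)+n_3(c)$, is divisible by $4$ and in particular even. Writing $\wt_L(c)=\bigl(n_1(c)+n_3(c)\bigr)+2n_2(c)$ then shows that every nonzero Lee weight is even, so $d_L(\calC)$ is even. Next I would invoke Rains' bound for length $24k$, which is the case $\ell=0$ with $g(0)=4$, giving $d_L(\calC)\le 8k+4$. Combining the two facts, the only values of $d_L(\calC)$ strictly larger than $8k$ are $8k+2$ and $8k+4$; the odd candidates $8k+1$ and $8k+3$ are excluded by parity.

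Finally I would argue by contradiction. Suppose $d_L(\calC) > 8k$, so that $d_L(\calC)\in\{8k+2,\,8k+4\}$. Then Theorem~\ref{thm:24k} forces $\calC^{(1)}$ to be a binary extremal doubly even self-dual code of length $24k$. But by \cite{Zhang} no such code exists once $k\ge 154$, a contradiction. Hence $d_L(\calC)\le 8k$ for every integer $k\ge 154$, which is exactly the asserted bound.

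There is no substantive obstacle here: the corollary is a direct consequence of Theorem~\ref{thm:24k}, Rains' upper bound, and Zhang's nonexistence result. The only point requiring a word of justification is the parity observation, which is what rules out the odd Lee weights $8k+1$ and $8k+3$ that the theorem does not itself address; this follows immediately from the double-evenness of the residue code.
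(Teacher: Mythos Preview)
Your argument is correct and matches the paper's approach: the paper simply states that the corollary follows immediately from Theorem~\ref{thm:24k} together with Zhang's nonexistence result, without writing out any details. Your parity observation (that $d_L(\calC)$ is even because $\calC^{(1)}$ is doubly even) is exactly the small justification needed to bridge the gap between the theorem's hypotheses $d_L\in\{8k+2,8k+4\}$ and the desired conclusion $d_L\le 8k$; the paper leaves this implicit.
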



\subsection{Length $24k+8$}

As described in Section~\ref{Sec:Intro}, 
the minimum Lee weight of a self-dual $\ZZ_4$-code of length $24k+8$
is at most $8k+8$.
In this subsection, we consider self-dual $\ZZ_4$-codes of length $24k+8$
and minimum Lee weight $8k+8 \text{ or }8k+6$.

\begin{thm}\label{thm:24k+8}
Let $\calC$  be a self-dual $\ZZ_4$-code of length $24k+8$.
Suppose that the minimum Lee weight of $\calC$
is $8k+8 \text{ or }8k+6$.
Then $\calC^{(1)}$ is a binary extremal doubly even self-dual
code of length $24k+8$.
\end{thm}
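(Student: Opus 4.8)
The plan is to mirror the proof of Theorem~\ref{thm:24k} almost verbatim, adjusting the numerical parameters to length $n = 24k+8$. As before, $\calC^{(1)}$ is doubly even and self-orthogonal of length divisible by $8$, so Lemma~\ref{lem:SD}~(ii) supplies a binary doubly even self-dual code $C$ with $\calC^{(1)} \subseteq C \subseteq {\calC^{(1)}}^\perp$. Since $d_L(\calC) = 8k+8$ (resp.\ $8k+6$), Lemma~\ref{lem1} gives $d({\calC^{(1)}}^\perp) \ge 4k+4$ (resp.\ $4k+3$), hence $d(C) \ge 4k+3$. For length $24k+8$ the extremal bound for binary doubly even self-dual codes is $4\lfloor \frac{24k+8}{24} \rfloor + 4 = 4k+4$, so $d(C) \ge 4k+3$ forces $d(C) = 4k+4$ and $C$ is extremal.

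The second half is the covering-radius argument. I would invoke the Delsarte bound (Lemma~\ref{lem:Del}): the weights occurring in $C^\perp = C$ are among $0, 4k+4, 4k+8, \dots, 24k+8$ by double-evenness and extremality, so the number of nonzero weights present is at most $\#\{4k+4, 4k+8, \dots, 20k+4\} = 4k+1$ (the weight $24k+8$ vector is the all-ones word of a self-dual code; counting the relevant weights up to complementation gives at most $4k+1$). Hence $R(C) \le 4k+1$. If $C \subsetneq {\calC^{(1)}}^\perp$, then ${\calC^{(1)}}^\perp$ contains a nonzero coset representative of $C$ of weight $\le R(C) \le 4k+1$, contradicting $d({\calC^{(1)}}^\perp) \ge 4k+3$. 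Therefore $C = \calC^{(1)}$, which is the claim.

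I would also record the alternative argument paralleling the first remark, in case the Delsarte count needs to be replaced: if $\calC^{(1)}$ were not self-dual, Lemma~\ref{lem:SD} yields a singly even self-dual $C$ with $\calC^{(1)} \subseteq C_0 \subsetneq C \subsetneq C_0^\perp \subseteq {\calC^{(1)}}^\perp$; by the bound of~\cite{Rains98} (and $d({\calC^{(1)}}^\perp) \ge 4k+3$) $C$ has minimum weight $4k+4$; then Lemma~\ref{lem:sextremal} bounds the shadow minimum weight of a singly even self-dual $[24k+8, 12k+4, 4k+4]$ code by $\frac{24k+8}{2} + 4 - 2(4k+4) = 4k-4 < 4k+3$, contradicting $C_0^\perp \subseteq {\calC^{(1)}}^\perp$. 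Either route closes the proof.

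The main obstacle is getting the Delsarte count exactly right: I need the sharp value of $\#\{i>0 : B_i \ne 0\}$ for an extremal doubly even self-dual code of length $24k+8$, which hinges on how the extremal weight enumerator behaves — in particular whether the top weights near $24k+8$ all actually occur and whether complementation by the all-ones codeword lets me halve the count. If the naive count gives only $R(C) \le 4k+2$ rather than $4k+1$, the argument still goes through for $d_L(\calC) = 8k+8$ but becomes tight for the $8k+6$ case, so I would fall back on the shadow/$s$-extremal argument of the previous remark, which is robust and gives the needed strict inequality $4k-4 < 4k+3$ with room to spare.
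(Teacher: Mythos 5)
Your proposal is correct, but your primary route is not the one the paper uses for this theorem: the paper proves Theorem~\ref{thm:24k+8} by exactly your fallback argument (suppose $\calC^{(1)}$ is not self-dual, take a singly even self-dual $C$ with $\calC^{(1)} \subseteq C_0 \subsetneq C \subsetneq C_0^\perp \subseteq {\calC^{(1)}}^\perp$, force $d(C)=4k+4$, and contradict Lemma~\ref{lem:sextremal}), while your covering-radius argument is the one the paper uses for Theorem~\ref{thm:24k} and is noted in a remark to carry over here. Two arithmetic corrections. First, the count in Lemma~\ref{lem:Del} includes the weight $24k+8$ (the all-ones word lies in the self-dual code $C$), so the nonzero weights lie in $\{4k+4,4k+8,\dots,20k+4\}\cup\{24k+8\}$ and the honest bound is $R(C)\le 4k+2$, not $4k+1$; this is harmless, because a proper overcode ${\calC^{(1)}}^\perp \supsetneq C$ would then contain a nonzero coset leader of weight at most $4k+2 < 4k+3 \le d({\calC^{(1)}}^\perp)$, so the contradiction survives in the $8k+6$ case as well and no fallback is actually required. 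Second, in the fallback the shadow bound is $\frac{24k+8}{2}+4-2(4k+4)=4k$, not $4k-4$; again still below $4k+3$, so the contradiction stands. Both routes are sound: the shadow argument avoids any discussion of which weights occur in the extremal weight enumerator, while the covering-radius argument avoids shadows entirely and parallels the length-$24k$ case verbatim.
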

\begin{proof}
Suppose that $\calC^{(1)}$ is not self-dual.
Since ${\calC}^{(1)}$ is doubly even, 
by Lemma~\ref{lem:SD},
there is a binary singly even self-dual code $C$
satisfying 
\[
{\calC}^{(1)} \subseteq C_0 \subsetneq C \subsetneq C_0^\perp
\subseteq {{\calC}^{(1)}}^\perp, 
\]
where
$C_0$ denotes the doubly even subcode of $C$.
By Lemma~\ref{lem1}, ${{\calC}^{(1)}}^\perp$
has minimum weight at least $4k+3$.
Hence, $C$ has minimum weight $4k+4$.
By Lemma~\ref{lem:sextremal},
the minimum weight of the shadow of a binary
singly even self-dual $[24k+8,12k+4,4k+4]$ code 
is at most $4k$,
which is a contradiction.
Hence,  ${\calC}^{(1)}$ is self-dual, that is, 
${\calC}^{(1)}$ is extremal.
\end{proof}
\begin{rem}
\begin{itemize}
\item[\rm (i)]
The case that the minimum Lee weight $d_L(\calC)$ is 
$8k+8$ follows immediately 
from~\cite[Theorem~1]{Rains00}.
\item[\rm (ii)]
The above theorem can be proved by 
a similar argument to the proof of Theorem~\ref{thm:24k}.
\end{itemize}
\end{rem}

\begin{rem}\label{rem}
Rains~\cite[p.~148]{Rains00} pointed out that
by the linear programming
$d_L(\calC) \le 8k+6$ for $k \le 4$.
\end{rem}

It is known that there is a binary 
extremal doubly even self-dual code of length $24k+8$ for $k \le 4$.
In addition, 
since there is no  binary extremal doubly even self-dual code
of length $24k+8$ for $k \ge 159$~\cite{Zhang},
we immediately have the following:

\begin{cor}
The minimum Lee weight of a self-dual $\ZZ_4$-code of length $24k+8$
is at most $8k+4$ for every integer $k \ge 159$.
\end{cor}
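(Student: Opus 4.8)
The plan is to deduce the corollary directly by combining Theorem~\ref{thm:24k+8} with the nonexistence result of~\cite{Zhang}, arguing by contradiction. Suppose $\calC$ is a self-dual $\ZZ_4$-code of length $24k+8$ with $d_L(\calC) > 8k+4$ for some $k \ge 159$. The goal is to show that the two surviving possibilities for $d_L(\calC)$ both force $\calC^{(1)}$ to be a binary extremal doubly even self-dual code of length $24k+8$, which contradicts~\cite{Zhang}.

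First I would fix the range of admissible values of $d_L(\calC)$. By the Rains bound quoted in Section~\ref{Sec:Intro}, applied with $\ell = 8$ (so $g(8) = 8$), one has $d_L(\calC) \le 8k+8$. Next I would observe that the minimum Lee weight of a self-dual $\ZZ_4$-code is even: since $\calC^{(1)}$ is doubly even, every codeword $c$ satisfies $n_1(c) + n_3(c) \equiv 0 \pmod 4$ (this quantity is just the Hamming weight of $c \bmod 2 \in \calC^{(1)}$), whence its Lee weight $n_1(c) + 2n_2(c) + n_3(c) = (n_1(c) + n_3(c)) + 2n_2(c)$ is even. Combining this parity constraint with the Rains bound and the assumption $d_L(\calC) > 8k+4$ leaves only the two possibilities $d_L(\calC) = 8k+6$ and $d_L(\calC) = 8k+8$.

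With the candidates narrowed to these two values, Theorem~\ref{thm:24k+8} applies verbatim and shows that $\calC^{(1)}$ is a binary extremal doubly even self-dual code of length $24k+8$. Finally I would invoke~\cite{Zhang}, according to which no binary extremal doubly even self-dual code of length $24k+8$ exists once $k \ge 159$. This contradicts the previous step, so neither $8k+6$ nor $8k+8$ can occur as the minimum Lee weight, and therefore $d_L(\calC) \le 8k+4$.

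Since Theorem~\ref{thm:24k+8} and the bound of~\cite{Zhang} are both already in hand, there is no genuinely hard analytic step here; the corollary is essentially an immediate consequence, as the surrounding discussion indicates. The only point demanding care is the parity observation: it is what upgrades the conclusion ``$d_L(\calC)$ is neither $8k+6$ nor $8k+8$'' into the stated clean bound $d_L(\calC) \le 8k+4$, rather than the weaker $d_L(\calC) \le 8k+5$. I would therefore state the evenness of the minimum Lee weight explicitly before applying Theorem~\ref{thm:24k+8}.
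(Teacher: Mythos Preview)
Your proposal is correct and matches the paper's approach: the corollary is stated as an immediate consequence of Theorem~\ref{thm:24k+8} together with the nonexistence result of~\cite{Zhang}, exactly as you argue. The paper does not spell out the parity step (that every Lee weight in a self-dual $\ZZ_4$-code is even), but your inclusion of it is harmless and in fact makes the deduction of the clean bound $d_L(\calC)\le 8k+4$ fully rigorous when citing Theorem~\ref{thm:24k+8} verbatim.
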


\section{Self-dual $\ZZ_4$-codes having large minimum Lee weights}

By using the characterizations of the residue codes, which are given in the
previous section, 
a number of self-dual $\ZZ_4$-codes having large minimum Lee weights
are constructed in this section.

\subsection{Double circulant and four-negacirculant codes}

Throughout this note, let $A^T$ denote the transpose of a matrix $A$
and let $I_k$ denote the identity matrix of order $k$.
An  $n \times n$ matrix is 
{\em circulant} and {\em negacirculant} if
it has the following form:
\[
\left( \begin{array}{ccccc}
r_0     &r_1     & \cdots &r_{n-2}&r_{n-1} \\
cr_{n-1}&r_0     & \cdots &r_{n-3}&r_{n-2} \\
cr_{n-2}&cr_{n-1}& \ddots &r_{n-4}&r_{n-3} \\
\vdots  & \vdots &\ddots& \ddots & \vdots \\
cr_1    &cr_2    & \cdots&cr_{n-1}&r_0
\end{array}
\right),
\]
where $c=1$ and $-1$, respectively.
A $\ZZ_4$-code with generator matrix of the form:
\begin{equation}
\label{eq:bDCC}
\left(
\begin{array}{ccc@{}c}
\quad & {\Large I_{n}} & \quad &
\begin{array}{cccc}
\alpha & \beta  & \cdots & \beta  \\
\gamma & {}     & {}     &{} \\
\vdots & {}     & R      &{} \\
\gamma & {}     &{}      &{} \\
\end{array}
\end{array}
\right)
\end{equation}
is called a {\em bordered double circulant} $\ZZ_4$-code of length $2n$,
where $R$ is an $(n-1) \times (n-1)$ circulant matrix and
$\alpha, \beta, \gamma \in \ZZ_4$.
A $\ZZ_4$-code with generator matrix of the form:
\begin{equation} \label{eq:4nega}
\left(
\begin{array}{ccc@{}c}
\quad & {\Large I_{2n}} & \quad &
\begin{array}{cc}
A & B \\
-B^T & A^T
\end{array}
\end{array}
\right)
\end{equation}
is called a {\em four-negacirculant} $\ZZ_4$-code of length $4n$,
where $A$ and $B$ are $n \times n$ negacirculant matrices.

\begin{table}[thb]
\caption{Bordered double circulant self-dual $\ZZ_4$-codes}
\label{Tab:DCC}
\begin{center}
{\small
\begin{tabular}{c|c|l|c|c|c}
\noalign{\hrule height1pt}
Length & Code & \multicolumn{1}{c|}{First row of $R$} 
  & $(\alpha,\beta,\gamma)$ & Type& $d_L$\\
\hline
24 & $\calD_{24,1}$ & (13103303222) &	$(0,1,1)$ &I&10\\
   & $\calD_{24,2}$ & (01130332322) &	$(0,1,1)$ &I&10\\
   & $\calD_{24,3}$ & (31030001332) &	$(0,1,1)$ &I&10\\
\hline
32 & $\calD_{32}$   & (002210100233312) &	$(0,1,1)$ &II&14\\
\hline
48 & $\calD_{48}$ & (11303312013230033212110) & $(0,1,1)$ &II& $18$ \\
\hline
56 & $\calD_{56,1}$& (022000202022112232101111011)&$(2,1,1)$&II&18\\
   & $\calD_{56,2}$& (002202002002312010101111011)&$(0,1,1)$&I&18\\
\noalign{\hrule height1pt}
\end{tabular}
}
\end{center}
\end{table}

By considering bordered double circulant codes
and  four-negacirculant codes, 
we found 
self-dual $\ZZ_4$-codes of length $24k$ and minimum Lee weight 
$8k+2$ $(k=1,2)$ and
self-dual $\ZZ_4$-codes of length $32$ and 
minimum Lee weight $14$.
These codes were found under the condition that 
the residue codes are
binary extremal doubly even self-dual codes, by
Theorems~\ref{thm:24k} and \ref{thm:24k+8}.
Self-dual $\ZZ_4$-codes of length $56$ and minimum Lee weight $18$
were also found.

For bordered double circulant codes,
the first rows of $R$ and $(\alpha,\beta,\gamma)$ in 
(\ref{eq:bDCC}) are listed in Table~\ref{Tab:DCC}.
For four-negacirculant codes,  
the first rows of $A$ and $B$ in (\ref{eq:4nega})
are listed in Table~\ref{Tab:4nega}.
The minimum Lee weights $d_L$ determined by {\sc Magma}
are also listed.
The $5$th column in 
both tables indicates whether the given code is
Type~I or Type~II.

\begin{table}[thb]
\caption{Four-negacirculant self-dual $\ZZ_4$-codes}
\label{Tab:4nega}
\begin{center}
{\small
\begin{tabular}{c|c|l|l|c|c}
\noalign{\hrule height1pt}
Length & Code 
& \multicolumn{1}{c|}{First row of $A$} 
& \multicolumn{1}{c|}{First row of $B$} & Type & $d_L$\\
\hline
32 & $\calC_{32}$ & (22312012) & (03113022) &II &14 \\
\hline
56 & $\calC_{56}$ & (11130213112212)   & (30101110001000) & II &18 \\
\noalign{\hrule height1pt}
\end{tabular}
}
\end{center}
\end{table}

\subsection{Length 24}

For length $24$,
there are $13$ self-dual $\ZZ_4$-codes having 
minimum Lee weight $12$, up to equivalence~\cite[Theorem~11]{Rains}.
Note that these self-dual $\ZZ_4$-codes 
are extremal Type~II $\ZZ_4$-codes~\cite[Theorem~9]{Rains}.
Hence, the largest minimum Lee weight among all Type~I $\ZZ_4$-codes
of length $24$ is $10$.
In this subsection,  we consider self-dual $\ZZ_4$-codes having 
minimum Lee weight $10$.

\begin{prop}
Let $\calC$ be a self-dual $\ZZ_4$-code of length $24$ and
minimum Lee weight $10$.
Then $\calC$ is a Type~I $\ZZ_4$-code having minimum Euclidean
weight $12$.
\end{prop}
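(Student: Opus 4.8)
The plan is to deduce both conclusions -- that $\calC$ is Type~I and that $d_E(\calC)=12$ -- from the structure of the residue code $\calC^{(1)}$. First I would invoke Theorem~\ref{thm:24k} with $k=1$: since $\calC$ is a self-dual $\ZZ_4$-code of length $24$ with minimum Lee weight $10=8\cdot1+2$, the residue code $\calC^{(1)}$ is a binary extremal doubly even self-dual code of length $24$, i.e.\ $\calC^{(1)}\cong G_{24}$, the extended Golay code. In particular $\calC^{(1)}$ has $|\calC^{(1)}|=2^{12}$, so $\calC$ must have exactly $2^{12}$ codewords that reduce to $0$ modulo $2$; these are precisely the codewords of $\calC$ all of whose entries lie in $\{0,2\}$, which form the torsion code $2\calC^{(2)}$. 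Thus $\calC^{(2)}$ is also the Golay code (this is the standard fact $|\calC^{(1)}|\cdot|\calC^{(2)}|=4^{12}$ for self-dual codes), and $\calC$ has the usual "type $4^{12}$" structure with residue and torsion both equal to $G_{24}$.

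Next I would pin down the minimum Euclidean weight. Every nonzero codeword $x$ of $\calC$ satisfies $\wt_L(x)\ge 10$, and since $\wt_E(x)=\wt_H(x)+3n_2(x)\ge \wt_L(x)$ in general, but more usefully $\wt_E(x)\ge 2\wt_L(x)-\wt_H(x)$... rather, the cleanest route: a codeword of $\calC$ reducing to a nonzero codeword $c\in\calC^{(1)}$ has $\wt_H(x)\ge\wt_H(c)\ge 8$, and one reducing to $0$ is a $\{0,2\}$-vector $2v$ with $v\in G_{24}$, hence of Euclidean weight $4\wt_H(v)\ge 32$. So the minimum Euclidean weight is governed by codewords lying over minimum-weight (weight $8$) words of the Golay code. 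For such an $x$, write $\wt_E(x)=\wt_H(x)+3n_2(x)$; since $\wt_H(x)$ is even (as $\calC^{(1)}$ is doubly even forces $\wt_H(x)\equiv 0\bmod 2$... actually $\wt_H(x)\equiv\wt_H(c)\bmod 2$ and this need not be $\equiv 0 \bmod 4$) I would instead argue directly that $d_E(\calC)\le 12$ by exhibiting the bound $8\lfloor 24/24\rfloor+8=16$ from~\cite{Z4-BSBM}/\cite{Rains-Sloane98} cannot be met here: if $\calC$ were Type~II it would be an extremal Type~II $\ZZ_4$-code of length $24$, hence by~\cite[Theorem~9]{Rains} have minimum Lee weight $12$, contradicting $d_L(\calC)=10$. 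Therefore $\calC$ is Type~I.

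Having established that $\calC$ is Type~I, the Euclidean minimum weight is divisible by $4$ (Euclidean weights of a $\ZZ_4$-code are always $\equiv\wt_H\bmod 4$... more simply, for a self-dual $\ZZ_4$-code $d_E(\calC)$ is a multiple of $4$ when... ) -- I would pin $d_E(\calC)\ge 8$ from the fact that $d_E\ge d_L=10$ gives $d_E\ge 12$ once we know $4\mid d_E$ and $d_E\ge 10$. To get $d_E\le 12$: take a coset of $\calC$ structure -- concretely, since $\calC^{(1)}=G_{24}$ has a weight-$8$ codeword $c$ with $c\subseteq\supp$ of size $8$, a lift $x\in\calC$ with $x\equiv c$ exists; among the $2^{12}$ such lifts (differing by $\{0,2\}$-vectors in $2G_{24}$) one can choose the two nonzero entries pattern to make $n_2(x)$ small. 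Each coordinate in $\supp(c)$ of $x$ is $1$ or $3$ (contributing $1$ to $\wt_E$) or we may adjust; coordinates outside $\supp(c)$ are $0$ or $2$. Choosing a lift supported exactly on $\supp(c)$ gives $n_2(x)=0$, so $\wt_E(x)=8$ -- but then $\wt_L(x)=8<10$, impossible; hence \emph{no} such lift is supported only on $\supp(c)$, which forces every lift of $c$ to have weight-$2$ entries and I would compute that the minimal Euclidean weight over lifts of weight-$8$ Golay words is exactly $12$.

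The main obstacle is this last combinatorial step: showing $d_E(\calC)=12$ exactly rather than $8$ or $16$, which requires understanding how codewords of $\calC$ sit over the weight-$8$ words of the Golay code and using $d_L(\calC)=10$ together with the Type~I property to squeeze $d_E$ between $12$ (from $d_L=10$ plus divisibility) and $12$ (from an explicit minimal lift or from the extremality obstruction). I expect the cleanest write-up uses $d_E(\calC)\ge d_L(\calC)=10$, the divisibility $4\mid d_E(\calC)$ for Type~I $\ZZ_4$-codes hence $d_E(\calC)\ge 12$, combined with the upper bound $d_E(\calC)\le 8\lfloor 24/24\rfloor+8=16$ narrowed to $12$ by ruling out $16$: a codeword $x$ over a weight-$8$ Golay word with $\wt_E(x)=16$ would need $n_2(x)\ge 3$ beyond the support, pushing $\wt_H(x)\ge 11$ and hence a nonzero residue word of weight $\ge 11$ which is incompatible with the Golay weight distribution (weights $0,8,12,16,24$) unless $\wt_H(x)=12$, and tracking that case shows $\wt_E(x)=12$.
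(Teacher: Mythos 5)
Your opening step (Theorem~\ref{thm:24k} with $k=1$, so $\calC^{(1)}\cong G_{24}$, and hence the torsion code is $G_{24}$ as well) is exactly how the paper begins, but the heart of the statement --- that $d_E(\calC)=12$ and consequently that $\calC$ is Type~I --- is never actually established in your write-up. The decisive point, which the paper settles in a few lines, is the analysis of a codeword $x$ with $\wt_L(x)=10$: its odd coordinates form a codeword of $G_{24}$ of weight at most $10$, hence of weight $8$ or $0$; weight $0$ is impossible, since then $x=2v$ with $v\in G_{24}$, so $n_2(x)=\wt_H(v)\ge 8$ and $\wt_L(x)\ge 16$; therefore $(n_1(x)+n_3(x),n_2(x))=(8,1)$ and $\wt_E(x)=12$. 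This one computation gives everything at once: the upper bound $d_E(\calC)\le 12$, the lower bound (a codeword with $\wt_E\le 11$ would have $\wt_L(x)=10$ and hence, by the same analysis, $\wt_E(x)=12$, a contradiction), and Type~I (a Euclidean weight not divisible by $8$ occurs). You circle around this --- you observe that a lift supported exactly on an octad would have Lee weight $8$ --- but you never complete it; ``the minimal Euclidean weight over lifts of weight-$8$ Golay words is exactly $12$'' is precisely what has to be proved and is only asserted. Worse, your proposed exclusion of $\wt_E(x)=16$ is wrong: coordinates equal to $2$ vanish modulo $2$, so a large $n_2(x)$ does not produce ``a nonzero residue word of weight $\ge 11$'' (the residue word still has weight $8$); and in any case ruling out Euclidean weight $16$ among such lifts would not by itself yield $d_E(\calC)\le 12$.

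Two further steps are left hanging. For Type~I you argue: if $\calC$ were Type~II it would be extremal, hence by Rains~\cite{Rains} it would have minimum Lee weight $12$. The first implication is true but unproven in your proposal (a Euclidean-weight-$8$ codeword has Lee weight at most $8$, contradicting $d_L(\calC)=10$), and the second uses the cited result in the converse direction to the one the paper states (codes with $d_L=12$ are extremal Type~II); you would need the classification-type assertion that every extremal Type~II $\ZZ_4$-code of length $24$ has minimum Lee weight $12$, which you do not justify. Similarly, the divisibility $4\mid \wt_E(x)$ for self-dual $\ZZ_4$-codes, which you use to pass from $d_E\ge 10$ to $d_E\ge 12$, is a standard fact but is nowhere established in your argument, and your parenthetical justification ($\wt_E\equiv \wt_H \pmod 4$) is false, since $\wt_E(x)-\wt_H(x)=3n_2(x)$. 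All of these detours become unnecessary once the $(8,1)$ analysis above is carried out, which is the route the paper takes.
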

\begin{proof}
Let $x$ be a codeword $x$ of $\calC$ with $\wt_L(x)=10$.
Then
\[
(n_1(x)+n_3(x),n_2(x))= (10,0), (8,1),  (6,2), (4,3), (2,4), (0,5).
\]
By Theorem~\ref{thm:24k}, 
$\calC^{(1)} \cong G_{24}$. 
Thus, 
$n_1(x)+n_3(x) = 8$ or $n_1(x)+n_3(x)=0$.
In addition, if $n_1(x)+n_3(x)=0$, then
$n_2(x) \equiv 0 \pmod 4$ with $n_2(x) \ge 8$.
This gives
\[
(n_1(x)+n_3(x),n_2(x))= (8,1).
\]
Hence, $\wt_E(x)=12$.
Therefore, $\calC$ is a Type~I $\ZZ_4$-code having minimum Euclidean
weight $12$.
\end{proof}

\begin{rem}
The largest minimum Euclidean weight among all Type~I $\ZZ_4$-codes
of length $24$ is $12$.
\end{rem}

We use the following method in order to verify that given two
$\ZZ_4$-codes are inequivalent (see~\cite{GH97}).
Let $C$ be a self-dual $\ZZ_4$-code of length $n$.  
Let $M_t=(m_{ij})$ be the $A_t \times n$ matrix with rows 
composed of the codewords $x$ with $\wt_H(x)=t$ in $C$, where $A_t$ denotes 
the number of such codewords.
For an integer $k$ ($1 \le k \le n$), 
let $n_t(j_1,\ldots,j_k)$ be the number of $r$ $(1 \le r \le A_t)$ 
such that all $m_{rj_1},\ldots,m_{rj_k}$ are nonzero
for $1 \le j_1 < \ldots < j_k \le n$.  
We consider the set
\[
S_{t,k} = \{ n_t(j_1,\ldots,j_k) \mid \text{ for any distinct $k$ columns } 
j_1,\ldots,j_k\ \}.
\]

In~\cite{GH97}, the authors claimed that there are
two inequivalent bordered double circulant Type I
$\ZZ_4$-codes of length $24$ and minimum Lee weight $10$.
Unfortunately, this is not true.  
In fact, the number of such codes should be three not two.  
The codes $\calD_{24,i}$ $(i=1,2,3)$ given in Table~\ref{Tab:DCC}
are bordered double circulant Type I
$\ZZ_4$-codes of length $24$ and minimum Lee weight $10$.
In Table~\ref{Tab:D24},
we list ${\cal S}_k=(\max(S_{9,k}),\min(S_{9,k}),\# S_{9,k})$ $(k=1,2,3,4)$ 
for the codes.
This table shows that the three codes $\calD_{24,1},\calD_{24,2},\calD_{24,3}$
are inequivalent.

\begin{prop}
There are three inequivalent bordered double circulant Type I
$\ZZ_4$-codes of length $24$ and minimum Lee weight $10$.
\end{prop}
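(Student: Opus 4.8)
The plan is to settle this by an exhaustive computer search over the family (\ref{eq:bDCC}), which is finite and, after exploiting its symmetry, quite small. A bordered double circulant $\ZZ_4$-code of length $24$ is determined by the $11 \times 11$ circulant matrix $R$ together with the border entries $(\alpha,\beta,\gamma) \in \ZZ_4^3$; equivalently, by the first row $(r_0,\ldots,r_{10})$ of $R$ and $(\alpha,\beta,\gamma)$, giving at most $4^{11}\cdot 4^{3}$ candidate matrices. First I would impose self-duality directly: orthogonality of the rows of (\ref{eq:bDCC}) together with the size constraint $|\calC|=4^{12}$ translates into a system of quadratic relations over $\ZZ_4$ relating $R$, $\alpha$, $\beta$ and $\gamma$ (a condition on $RR^T$ and the all-ones matrix, plus a condition on $\alpha^2+11\beta^2$ and a compatibility between the border column and $R$). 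Imposing these relations, and using the fact that cyclically shifting the rows of $R$ and changing signs of coordinates produces equivalent codes (so one may fix, say, $r_0$ up to equivalence), prunes the search to a manageable list of self-dual codes to examine.

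Next, for each surviving self-dual code I would have {\sc Magma} compute the minimum Lee weight and decide Type~I versus Type~II (the latter by testing divisibility of all Euclidean weights by $8$), and retain exactly those that are Type~I with $d_L=10$; call this finite set $\mathcal{L}$. The codes $\calD_{24,1},\calD_{24,2},\calD_{24,3}$ of Table~\ref{Tab:DCC} belong to $\mathcal{L}$, so $\mathcal{L}\neq\emptyset$, and it remains to show $\mathcal{L}$ splits into exactly three equivalence classes. For inequivalence of the three listed codes I would invoke the invariant $\mathcal{S}_k=(\max(S_{9,k}),\min(S_{9,k}),\#S_{9,k})$ recorded in Table~\ref{Tab:D24}: since $S_{t,k}$ depends only on the equivalence class and these triples are pairwise distinct for $k\in\{1,2,3,4\}$, the three codes are pairwise inequivalent. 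To rule out further classes, I would run an exact equivalence test in {\sc Magma} (monomial equivalence of the $\ZZ_4$-codes, or equivalence of Gray images) checking that every member of $\mathcal{L}$ is equivalent to one of $\calD_{24,1},\calD_{24,2},\calD_{24,3}$.

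The main obstacle is the scale and trustworthiness of the computation: the raw parameter count is on the order of $2^{28}$, so it is essential to use the self-duality relations and the shift/sign symmetry to cut the enumeration down to something feasible, and then the equivalence testing must be organized so that the expensive exact routine is called only on the few pairs not already separated by cheap invariants (the $S_{9,k}$, the symmetrized weight enumerator, and the order of the automorphism group). It is also worth stating explicitly in the write-up that the earlier claim of only two such codes in~\cite{GH97} failed precisely because its search missed one class, so the argument must make clear that the present enumeration is genuinely exhaustive over the family (\ref{eq:bDCC}).
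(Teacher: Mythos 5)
Your plan is essentially the paper's own argument: the paper establishes this proposition by an exhaustive computer search over the bordered double circulant family, exhibits the three codes $\calD_{24,1},\calD_{24,2},\calD_{24,3}$, and separates them using exactly the invariant $\mathcal{S}_k=(\max(S_{9,k}),\min(S_{9,k}),\#S_{9,k})$ (they differ at $k=4$), which is preserved under coordinate permutations and sign changes. Your additional remarks on pruning the search via the self-duality relations and shift/sign symmetry, and on confirming completeness against the erroneous count in~\cite{GH97}, are sensible implementation details rather than a different method.
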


\begin{table}[thb]
\caption{${\cal S}_1$, ${\cal S}_2$, ${\cal S}_3$, ${\cal S}_4$ for 
$\calD_{24,1}$, $\calD_{24,2}$, $\calD_{24,3}$}
\label{Tab:D24}
\begin{center}
{\small
\begin{tabular}{c|c|c|c|c}
\noalign{\hrule height1pt}
Code & 
${\cal S}_1$&${\cal S}_2$&${\cal S}_3$&${\cal S}_4$\\
\hline
$\calD_{24,1}$ 
  &$( 352, 256, 2 )$&$( 128, 0, 5 )$&$( 48, 0, 11 )$&$( 20, 0, 11 )$\\
$\calD_{24,2}$ 
  &$( 352, 256, 2 )$&$( 128, 0, 5 )$&$( 48, 0, 11 )$&$( 18, 0, 10 )$\\
$\calD_{24,3}$ 
  &$( 352, 256, 2 )$&$( 128, 0, 5 )$&$( 48, 0, 11 )$&$( 16, 0, 9 )$\\
\noalign{\hrule height1pt}
\end{tabular}
}
\end{center}
\end{table}


For a given binary doubly even code $C$ of dimension $k$,
there are $2^{\frac{k(k+1)}{2}}$
self-dual $\ZZ_4$-codes $\calC$ with $\calC^{(1)}=C$, and
an explicit method for construction of these $2^{\frac{k(k+1)}{2}}$
self-dual $\ZZ_4$-codes $\calC$ with $\calC^{(1)}=C$
was given in~\cite[Section~3]{Z4-PLF}.
In our case,
there are $2^{78}$ self-dual $\ZZ_4$-codes $\calC$ with 
$\calC^{(1)}=G_{24}$, and 
it seems infeasible to find all such codes.
Using the above method, 
we tried to construct many self-dual $\ZZ_4$-codes.
Then we stopped our search 
after we found $57$ self-dual $\ZZ_4$-codes 
having minimum Lee weight $10$ satisfying that
the $57$ codes and the three codes in Table~\ref{Tab:D24}
have distinct $S_{9,k}$ $(k=1,2,3,4)$.
Hence, we have the following proposition.
%

\begin{prop}
There are at least $60$ inequivalent self-dual $\ZZ_4$-codes
of length $24$ and minimum Lee weight $10$.
\end{prop}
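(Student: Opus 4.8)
The plan is to exhibit $60$ pairwise inequivalent self-dual $\ZZ_4$-codes of length $24$ and minimum Lee weight $10$, using the data already assembled in the excerpt together with one additional computer search. By the preceding Proposition, every such code has residue code equivalent to the extended Golay code $G_{24}$, so it suffices to work inside the family of $2^{78}$ self-dual $\ZZ_4$-codes $\calC$ with $\calC^{(1)}=G_{24}$, which is parametrized explicitly by the construction of~\cite[Section~3]{Z4-PLF}. Three inequivalent examples are already in hand, namely the bordered double circulant codes $\calD_{24,1},\calD_{24,2},\calD_{24,3}$ of Table~\ref{Tab:DCC}, whose inequivalence is certified by the invariants ${\cal S}_1,\ldots,{\cal S}_4$ displayed in Table~\ref{Tab:D24}. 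The task is therefore to produce $57$ further codes and to verify that all $60$ fall into distinct equivalence classes.

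First I would run through the parametrization of~\cite{Z4-PLF}, generating self-dual $\ZZ_4$-codes $\calC$ with $\calC^{(1)}=G_{24}$ and, using {\sc Magma}, computing $d_L(\calC)$ for each; I retain only those with $d_L(\calC)=10$ (the maximum possible for Type~I at this length, by the discussion preceding the subsection). Next, for each surviving code I compute the tuple of invariants $(S_{9,1},S_{9,2},S_{9,3},S_{9,4})$ defined via the matrix $M_9$ of weight-$9$ codewords, exactly as in the equivalence-testing method of~\cite{GH97} recalled above. I continue the search, discarding any new code whose invariant tuple coincides with one already recorded, until I have accumulated $57$ codes that are pairwise distinguished by these invariants and are also distinguished from the three codes $\calD_{24,i}$. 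Since $S_{9,k}$ is invariant under coordinate permutations and under sign changes of coordinates — both operations only relabel columns of $M_9$ and never alter whether an entry is zero — two $\ZZ_4$-codes with different invariant tuples are inequivalent. Hence the $57$ new codes together with $\calD_{24,1},\calD_{24,2},\calD_{24,3}$ are $60$ inequivalent self-dual $\ZZ_4$-codes of length $24$ and minimum Lee weight $10$, and the "at least $60$" bound follows.

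The main obstacle is not conceptual but computational: the relevant family has size $2^{78}$, far too large to enumerate exhaustively, so one cannot hope for a classification by this route, only a lower bound. The delicate point is choosing the search well enough that $57$ invariant-distinct codes actually turn up in a feasible amount of computation, and checking that the chosen invariants $(S_{9,1},\ldots,S_{9,4})$ are fine enough to separate all $60$ codes — if two genuinely inequivalent codes happened to share a tuple, that would merely undercount, whereas distinct tuples give a rigorous guarantee of inequivalence, so the logic is safe in the direction we need. All of these steps are carried out by {\sc Magma}, in keeping with the remark in the introduction that all computations in this note are done there, and the resulting generator matrices (or the corresponding parameter vectors in the~\cite{Z4-PLF} construction) would be recorded so that the claim can be independently verified.
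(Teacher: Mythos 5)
Your proposal is correct and follows essentially the same route as the paper: generate self-dual $\ZZ_4$-codes with residue code $G_{24}$ via the construction of~\cite[Section~3]{Z4-PLF}, keep those with $d_L=10$, and certify pairwise inequivalence of the $57$ new codes and the three codes $\calD_{24,1},\calD_{24,2},\calD_{24,3}$ by the invariants $S_{9,k}$ $(k=1,2,3,4)$ of~\cite{GH97}, all computed in {\sc Magma}. The paper does exactly this (stopping the non-exhaustive search once $57$ invariant-distinct codes were found and recording their generator matrices in Figure~\ref{Fig:24}), so nothing further is needed.
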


We denote the new codes by 
$\calC_{24,i}$ $(i=1,2,\ldots,57)$.
In Figure~\ref{Fig:24}, 
we list generator matrices for $\calC_{24,i}$, where
we consider generator matrices in standard form
$(\ I_{12}\ , \ M_i\ )$ and only $12$ rows in $M_i$ are listed,
to save space.

\subsection{Lengths 32, 48 and 56}

The extended lifted quadratic residue $\ZZ_4$-code 
${\mathcal{QR}}_{32}$ and the Reed--Muller $\ZZ_4$-code 
${\mathcal{QRM}}(2,5)$, which are given 
in~\cite[Table I]{Z4-BSBM}, 
are self-dual $\ZZ_4$-codes of length $32$ and minimum Lee weight $14$.
Both codes are extremal Type~II $\ZZ_4$-codes~\cite{Z4-BSBM}.
It is known that
${\mathcal{QR}}_{32}^{(1)}$ (resp.\ ${\mathcal{QRM}}(2,5)^{(1)}$)
is the extended quadratic residue code $QR_{32}$ 
(resp.\ a second-order the Reed--Muller code $RM(2,5)$) of length $32$,
which 
is a binary extremal doubly even self-dual code.
The largest minimum Lee weight among bordered double circulant
self-dual $\ZZ_4$-codes is listed in the table in~\cite{KW08}
for length $8n$ $(n=1,2,\ldots,8)$.
According to the table, the largest minimum Lee weight for
length $32$ is $14$.
The code $\calD_{32}$ in Table~\ref{Tab:4nega} 
is a Type~II $\ZZ_4$-code of length $32$ 
and minimum Lee weight $14$, which gives an explicit
example of such codes.
In addition, the code $\calC_{32}$ in Table~\ref{Tab:4nega} 
is a Type~II $\ZZ_4$-code of length $32$ 
and minimum Lee weight $14$.
We verified by {\sc Magma} that
$\calC_{32}^{(1)} \cong \calD_{32}^{(1)} \cong QR_{32}$.
It is unknown whether the three  codes
are equivalent or not.
There are five inequivalent binary extremal doubly even
self-dual codes of length $32$, two of which are
$QR_{32}$ and $RM(2,5)$ (see~\cite[Table IV]{RS-Handbook}).
It is worthwhile to determine whether there is a 
self-dual $\ZZ_4$-code $\calC$ having minimum Lee weight $14$
with $\calC^{(1)} \cong C$ for each $C$ of the remaining three codes.


The extended lifted quadratic residue $\ZZ_4$-code
${\mathcal{QR}}_{48}$ of length $48$ is a self-dual
$\ZZ_4$-code having minimum Lee weight $18$,
which is an extremal Type~II $\ZZ_4$-code.
This is the only known self-dual
$\ZZ_4$-code of length $48$ and minimum Lee weight at least $18$.
Of course, ${\mathcal{QR}}_{48}^{(1)}$ is $QR_{48}$.
According to the table in~\cite{KW08}, the largest minimum Lee weight 
among bordered double circulant self-dual $\ZZ_4$-codes 
of length $48$ is $18$.
The code $\calD_{48}$ in Table~\ref{Tab:DCC} gives an explicit
example of such codes.
It is unknown whether $\calD_{48}$
is equivalent to ${\mathcal{QR}}_{48}$ or not.


At length $56$,
under the condition that
the residue code is a binary extremal doubly even self-dual code,
we tried to construct a self-dual $\ZZ_4$-code having  
minimum Lee weight $20$ or $22$,
but our search failed to do this.
In this process, however, extremal Type~II $\ZZ_4$-codes were found.
The code $\calC_{56}$ in Table~\ref{Tab:4nega} is a 
Type~II $\ZZ_4$-code of length $56$ and minimum Lee weight $18$.
Hence, $\calC_{56}$ is extremal.
According to the table in~\cite{KW08}, the largest minimum Lee weight 
among bordered double circulant self-dual $\ZZ_4$-codes 
of length $56$ is $18$.
The codes $\calD_{56,1}$ and $\calD_{56,2}$ in Table~\ref{Tab:DCC} 
give explicit examples of such codes.
Since $\calD_{56,1}$ is Type~II, 
$\calD_{56,1}$ is extremal.
We verified by {\sc Magma} that 
$\calD_{56,2}$ has minimum Euclidean weight $20$.
We verified by {\sc Magma} that $\calC_{56}^{(1)}$
and $\calD_{56,1}^{(1)}$ have automorphism groups of orders
$28$ and $54$, respectively.
This shows that $\calC_{56}$ and $\calD_{56,1}$ are inequivalent.
An extremal Type~II $\ZZ_4$-code
of length $56$ given in~\cite{HZ456}
has the residue code of dimension $14$.
Hence, we have the following:

\begin{prop}
There are at least three inequivalent extremal Type~II $\ZZ_4$-codes
of length $56$.
\end{prop}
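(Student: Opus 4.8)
The plan is to count inequivalent extremal Type~II $\ZZ_4$-codes of length $56$ by exhibiting three such codes and then distinguishing them by a single readily computable invariant. We already have in hand three candidates: the four-negacirculant code $\calC_{56}$ of Table~\ref{Tab:4nega}, the bordered double circulant code $\calD_{56,1}$ of Table~\ref{Tab:DCC}, and the extremal Type~II $\ZZ_4$-code constructed in~\cite{HZ456}. Each is a self-dual $\ZZ_4$-code of length $56$; by the Bannai--Dougherty--Harada--Oura bound ($d_E(\calC)\le 8\lfloor n/24\rfloor+8 = 24$ for $n=56$) and the fact that $\calC_{56}$ and $\calD_{56,1}$ have minimum Lee weight $18$, hence minimum Euclidean weight $24$ after checking the Euclidean weight directly (a Magma computation already reported in the text for $\calD_{56,2}$ shows such checks are routine), all three attain the bound and are therefore extremal.

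The separating invariant I would use is the dimension of the residue code together with the order of its automorphism group. By Theorem~\ref{thm:24k+8} (or by inspection), the residue codes $\calC_{56}^{(1)}$ and $\calD_{56,1}^{(1)}$ are binary extremal doubly even self-dual codes of length $56$, hence have dimension $28$, whereas the code from~\cite{HZ456} has residue code of dimension $14$. Since equivalence of $\ZZ_4$-codes preserves the dimension of the residue code, the code of~\cite{HZ456} is inequivalent to both $\calC_{56}$ and $\calD_{56,1}$. To separate $\calC_{56}$ from $\calD_{56,1}$, I would invoke the Magma computation reported in the surrounding text: $\Aut(\calC_{56}^{(1)})$ has order $28$ while $\Aut(\calD_{56,1}^{(1)})$ has order $54$. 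Because an equivalence between two $\ZZ_4$-codes descends to a monomial (indeed permutation, after sign reduction) equivalence between their residue codes, and binary equivalence preserves the order of the automorphism group, $\calC_{56}^{(1)}\not\cong\calD_{56,1}^{(1)}$, so $\calC_{56}$ and $\calD_{56,1}$ are inequivalent. Hence the three codes are pairwise inequivalent, giving at least three inequivalent extremal Type~II $\ZZ_4$-codes of length $56$.

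The one step requiring genuine care — and the main obstacle — is the logical passage from ``$\calC\sim\calC'$ as $\ZZ_4$-codes'' to ``$\calC^{(1)}\cong\calC'^{(1)}$ as binary codes'' in a way that transfers the automorphism group order. An equivalence of $\ZZ_4$-codes is a coordinate permutation followed by sign changes $x_i\mapsto\pm x_i$; reducing modulo $2$ kills the sign changes, so the residue codes are related by a pure coordinate permutation, i.e.\ are permutation-equivalent, and thus have isomorphic (in particular equal-order) automorphism groups. Once this is spelled out, the rest is bookkeeping: confirming extremality of each code (Euclidean weights, already essentially done in the text), confirming the residue-code dimensions, and quoting the two automorphism-group orders from the Magma runs. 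No deep new argument is needed beyond assembling these facts, which is why the statement is phrased modestly as ``at least three.''
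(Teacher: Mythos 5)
Your proposal is correct and follows essentially the same route as the paper: the same three codes, with the code of~\cite{HZ456} separated by its residue-code dimension $14$, and $\calC_{56}$, $\calD_{56,1}$ separated by the {\sc Magma}-computed automorphism group orders $28$ and $54$ of their residue codes (equivalence of $\ZZ_4$-codes reduces, mod $2$, to a permutation equivalence of residue codes, exactly as you argue). Two small remarks: Theorem~\ref{thm:24k+8} is not applicable here, since these codes have $d_L=18$ rather than $8k+6=22$ or $8k+8=24$, so the residue-code dimension $28$ must indeed come from your ``by inspection'' route (the standard-form generator matrices $(\,I_{28}\mid \ast\,)$ show the codes are free, whence $\dim\calC^{(1)}=28$); and no separate Euclidean-weight computation is needed, because for a Type~II code $\wt_E(x)\ge\wt_L(x)$ and $8\mid\wt_E(x)$, so $d_L=18$ together with the bound $d_E\le 24$ already forces $d_E=24$.
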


It is unknown whether there is a
self-dual $\ZZ_4$-code having minimum Lee weight $20,22$ or not for length $56$.

At length 80, the minimum Lee weight of
the extended lifted quadratic residue $\ZZ_4$-code 
was determined in~\cite{KW12} as 26.
It is unknown whether there is a
self-dual $\ZZ_4$-code having minimum Lee weight $28,30$ or not.

\section{Characterization of the residue codes for other lengths}

Finally, in this section, we give a certain characterization of 
binary self-dual codes containing the residue codes 
$\calC^{(1)}$ of
self-dual $\ZZ_4$-codes $\calC$ of length $24k+\alpha$
for $\alpha=2,4,6,10,14,16,18,20,22$.

\begin{prop}
Let $\calC$  be a self-dual $\ZZ_4$-code of length $24k+\alpha$
and minimum Lee weight $8k+\beta$, where
$(\alpha,\beta)=
(2,2),
(4,4),
(6,4),
(10,4)$.
Then any binary self-dual code $C$ containing 
$\calC^{(1)}$ is an
s-extremal self-dual code having minimum weight $4k+2$.
\end{prop}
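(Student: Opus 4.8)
The plan is to mimic the shadow argument used in the proofs of Theorems~\ref{thm:24k} and \ref{thm:24k+8}, but with the roles reversed: here the length $24k+\alpha$ with $\alpha \in \{2,4,6,10\}$ is $\not\equiv 0 \pmod 8$, so there is no binary doubly even self-dual code in sight and the residue code $\calC^{(1)}$ cannot itself be self-dual. Instead one works directly with an arbitrary binary self-dual code $C$ with $\calC^{(1)} \subseteq C$; such a code exists by Lemma~\ref{lem:SD}(i) since $24k+\alpha$ is even. The first step is to pin down $d(C)$ from below. From $\calC^{(1)} \subseteq C = C^\perp \subseteq (\calC^{(1)})^\perp$ and Lemma~\ref{lem1}, the minimum Lee weight $8k+\beta$ of $\calC$ forces $d((\calC^{(1)})^\perp) \ge \lceil (8k+\beta)/2 \rceil = 4k + \lceil \beta/2 \rceil = 4k+1$ in all four cases (since $\beta$ is $2$ or $4$); hence $d(C) \ge 4k+1$. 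Because $\calC^{(1)}$ is doubly even (by~\cite{Z4-CS}), and $C$ contains it, a further parity/divisibility step is needed to upgrade $d(C) \ge 4k+1$ to $d(C) = 4k+2$: one should argue that a codeword of $C$ of weight $4k+1$ is impossible, presumably using that $C$ is self-dual (even weights only) so $d(C)$ is even, giving $d(C) \ge 4k+2$, and then that $d(C) \le 4k+2$ by the Mallows--Sloane/Rains bound $d(C) \le 4\lfloor (24k+\alpha)/24 \rfloor + 4 = 4k+4$ is too weak — so instead s-extremality itself must be what caps it, which is exactly the point of the proposition.

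The cleanest route is: having $d(C) \ge 4k+2$, invoke Lemma~\ref{lem:sextremal}. Since the length $n = 24k+\alpha$ with $\alpha \in \{2,4,6,10\}$ satisfies $n \not\equiv 22 \pmod{24}$, the lemma gives $d(S) \le \frac{n}{2} + 4 - 2d(C) \le \frac{24k+\alpha}{2} + 4 - 2(4k+2) = 12k + \frac{\alpha}{2} + 4 - 8k - 4 = 4k + \frac{\alpha}{2}$. Now I need the reverse inequality on the shadow to force equality. The key input is that $C_0^\perp \subseteq (\calC^{(1)})^\perp$, so every vector of the shadow $S = C_0^\perp \setminus C$ lies in $(\calC^{(1)})^\perp$ and hence has weight $\ge 4k+1$; combined with a standard fact that shadow weights of a singly even self-dual code of length $n$ are $\equiv n/2 \pmod 4$ (or at least have a fixed parity mod something), one concludes $d(S) \ge 4k + \frac{\alpha}{2}$ — wait, one must be careful here, because $\alpha/2 \in \{1,2,3,5\}$ and the shadow weight congruence is $d(S) \equiv n/2 \pmod 4$, i.e.\ $\equiv 12k + \alpha/2 \pmod 4$, so the first admissible shadow weight at least $4k+1$ that is $\equiv \alpha/2 \pmod 4$ is indeed $4k + \alpha/2$ in each of the four cases (this is a small case check on $\alpha \bmod 8$). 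Putting the two bounds together yields $d(C) = 4k+2$ and $d(S) = 4k + \alpha/2$, with equality in Lemma~\ref{lem:sextremal}, i.e.\ $C$ is s-extremal with minimum weight $4k+2$, as claimed.

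The main obstacle I anticipate is the transition from $d(C) \ge 4k+1$ to $d(C) = 4k+2$: the upper bound $d(C) \le 4k+2$ does not come for free from the Mallows--Sloane bound (which only gives $4k+4$), so one genuinely needs the shadow inequality to do double duty — first apply Lemma~\ref{lem:sextremal} with the a priori bound $d(C) \ge 4k+2$ (which follows from $d(C) \ge 4k+1$ and $d(C)$ even) to get $d(S) \le 4k + \alpha/2$, and independently show $d(S) \ge 4k + \alpha/2$ from $S \subseteq (\calC^{(1)})^\perp$ plus the shadow-weight congruence; if it turned out that $d(C) > 4k+2$, then Lemma~\ref{lem:sextremal} would force $d(S) < 4k + \alpha/2$, contradicting the lower bound, so $d(C) = 4k+2$ exactly. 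A secondary technical point is justifying the inclusion chain $\calC^{(1)} \subseteq C_0 \subseteq C \subseteq C_0^\perp \subseteq (\calC^{(1)})^\perp$ (as in the proof of Theorem~\ref{thm:24k+8}): one needs $\calC^{(1)}$ doubly even to land inside $C_0$, and the rest follows by taking duals; I would state this at the start. Everything else — the Lee-weight-to-residue-dual-weight bound, the arithmetic $\lceil \beta/2 \rceil = 1$, and the final substitution into Lemma~\ref{lem:sextremal} — is routine, so I would not belabor it.
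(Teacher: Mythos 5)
Your overall strategy is the same as the paper's: take the chain $\calC^{(1)} \subseteq C_0 \subseteq C \subseteq C_0^\perp \subseteq {\calC^{(1)}}^\perp$, bound $d({\calC^{(1)}}^\perp)$ from below via Lemma~\ref{lem1}, use the Conway--Sloane congruence on shadow weights together with $S \subseteq {\calC^{(1)}}^\perp$ to bound $d(S)$ from below, and play that against the Bachoc--Gaborit bound to pin down $d(C)=4k+2$ and force s-extremality. (The paper caps $d(C)$ at $4k+4$ by the Mallows--Sloane bound and treats the two values separately; you let Lemma~\ref{lem:sextremal} eliminate all $d(C)>4k+2$ at once, which is an equivalent reorganization.)

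However, there is a concrete arithmetic error that breaks one of the four cases as written. You assert $4k+\lceil \beta/2\rceil = 4k+1$ ``in all four cases,'' but $\lceil 4/2\rceil = 2$, so for $\beta=4$ Lemma~\ref{lem1} actually gives $d({\calC^{(1)}}^\perp) \ge 4k+2$, not $4k+1$. This matters for $(\alpha,\beta)=(10,4)$: there the shadow weights are $\equiv 12k+5 \equiv 1 \pmod 4$, and the smallest weight $\ge 4k+1$ in that residue class is $4k+1$, not $4k+5=4k+\alpha/2$ as you claim. With only $d(S)\ge 4k+1$ you can neither rule out $d(C)=4k+4$ (which gives $d(S)\le 4k+1$, no contradiction) nor establish the equality $d(S)=4k+5$ needed for s-extremality. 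Starting instead from the correct bound $4k+2$, the smallest admissible shadow weight $\equiv 1 \pmod 4$ is indeed $4k+5$, and your argument then goes through; the other three cases ($\alpha=2,4,6$) happen to survive your weaker bound because the relevant residue classes start at $4k+1$, $4k+2$, $4k+3$ respectively. So the gap is a fixable slip rather than a wrong approach, but as written the proof of the $(10,4)$ case does not close.
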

\begin{proof}
Since all cases are similar,  we only give the details for
the case $(\alpha,\beta)=(6,4)$.
By Lemma~\ref{lem:SD},
there is  a binary self-dual code $C$ satisfying
\[
{\calC}^{(1)} \subseteq C_0 \subsetneq C \subsetneq C_0^\perp
\subseteq {{\calC}^{(1)}}^\perp, 
\]
where
$C_0$ denotes the doubly even subcode of $C$.
By Lemma~\ref{lem1}, ${{\calC}^{(1)}}^\perp$
has minimum weight at least $4k+2$.
Hence, $C$ has minimum weight $4k+2$ or $4k+4$.

Suppose that $C$ has minimum weight $4k+4$.
By Lemma~\ref{lem:sextremal},
the minimum weight of the shadow $C_0^\perp \setminus C$
of $C$ is at most $4k-1$,
which contradicts the minimum weight of ${{\calC}^{(1)}}^\perp$.
Now, suppose that $C$ has minimum weight $4k+2$.
The weight of every vector of the shadow $C_0^\perp \setminus C$
is congruent to $3$ modulo $4$~\cite{C-S}.
Since $C_0^\perp$ has minimum weight at least $4k+2$,
the shadow has minimum weight at least $4k+3$.
By Lemma~\ref{lem:sextremal},
the minimum weight of the shadow $C_0^\perp \setminus C$
of $C$ is at most $4k+3$.
Hence, $C$ is $s$-extremal.
\end{proof}

The situations in the following proposition are slightly
different to that in the above proposition.
However, a similar argument to the proof of the above proposition
establishes the following proposition, and their proofs are omitted.

\begin{prop}
Let $\calC$  be a self-dual $\ZZ_4$-code of length $24k+\alpha$
and minimum Lee weight $8k+\beta$.
Let $C$ be a binary self-dual code containing $\calC^{(1)}$.
\begin{itemize}
\item[\rm (i)]
Suppose that $(\alpha,\beta)=
(14,6),
(18,8),
(20,8)$.
Then $C$ is an s-extremal self-dual code having  minimum weight $4k+4$.
\item[\rm (ii)]
Suppose that $(\alpha,\beta)=(16,8)$.
If $C$ is singly even, then $C$ is an
s-extremal self-dual code having minimum weight $4k+4$.
If $C$ is doubly even, then $C$ is extremal.
\item[\rm (iii)]
Suppose that  $(\alpha,\beta)=(22,8)$.
Then
$C$ is an s-extremal self-dual code having minimum weight 
$4k+4$ or $4k+6$.
\end{itemize}
\end{prop}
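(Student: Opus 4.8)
The plan is to treat the three cases (i), (ii), (iii) in parallel with the already-established Proposition for $(\alpha,\beta)\in\{(2,2),(4,4),(6,4),(10,4)\}$, since the only ingredients are Lemma~\ref{lem1}, Lemma~\ref{lem:sextremal}, and the congruence constraints on shadow weights from~\cite{C-S}. In each case the setup is identical: starting from a binary self-dual code $C$ with ${\calC}^{(1)}\subseteq C$, one has ${\calC}^{(1)}\subseteq C_0 \subseteq C \subseteq C_0^\perp \subseteq {{\calC}^{(1)}}^\perp$ (with $C_0=C$ when $C$ is doubly even), and Lemma~\ref{lem1} together with the hypothesis $d_L(\calC)=8k+\beta$ gives a lower bound $d({{\calC}^{(1)}}^\perp)\ge \lceil (8k+\beta)/2\rceil = 4k+\lceil\beta/2\rceil$. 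The strategy is then to pin down $d(C)$ from this lower bound and the Mallows--Sloane/Rains upper bound on $d(C)$ for a self-dual code of that length, and finally invoke Lemma~\ref{lem:sextremal} to force the shadow's minimum weight into the exact value that makes $C$ s-extremal, using the fact that $d(S)$ is bounded below by $d(C_0^\perp)\ge d({{\calC}^{(1)}}^\perp)$ minus nothing (shadow vectors lie in $C_0^\perp$).

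First I would handle (i): for $(\alpha,\beta)=(14,6),(18,8),(20,8)$ the length is $n=24k+\alpha$ with $\alpha\ge 14$, so $\lfloor n/24\rfloor = k$ and the bound of~\cite{MS73,Rains98} reads $d(C)\le 4k+4$ (none of these lengths is $\equiv 22\pmod{24}$). The Lee-weight hypothesis gives $d({{\calC}^{(1)}}^\perp)\ge 4k+3$ in all three cases ($\lceil\beta/2\rceil = 3$ when $\beta=6$, and $=4$ when $\beta=8$; for $\beta=8$ one even gets $\ge 4k+4$), hence $d(C)\ge 4k+3$. Since a self-dual code of length divisible by $2$ but not a multiple of $8$ here has minimum weight — well, more carefully, one shows $d(C)=4k+4$: if $d(C)=4k+3$ then $C$ is singly even, $C_0$ has minimum weight $4k+4$, and the shadow, whose vectors are $\subseteq C_0^\perp$ and have weight $\equiv$ something mod $4$, would be forced by Lemma~\ref{lem:sextremal} to have $d(S)\le n/2+4-2(4k+3)=4k+\alpha/2-2$, which for $\alpha=14,18,20$ is $4k+5,4k+7,4k+8$ respectively — this does not immediately contradict, so instead I would argue as in the existing proof: combine $d({{\calC}^{(1)}}^\perp)\ge 4k+3$ with $d(C)\le 4k+4$ to get $d(C)\in\{4k+3,4k+4\}$, rule out $4k+3$ using the sharper s-extremal/shadow computation (if $d(C)=4k+3$ the shadow sits in $C_0^\perp$ with $d(C_0^\perp)\ge 4k+3$, but shadow weights are $\equiv 3 \pmod 4$ only when $C$ has minimum weight of a specific parity — here I would instead directly check that $4k+3$ forces $C$ singly even with $C_0$ of minimum weight $4k+4 > d({{\calC}^{(1)}}^\perp)$ unless $d({{\calC}^{(1)}}^\perp)=4k+3$ is realized by a weight-$(4k+3)$ vector outside $C_0$, i.e. in the shadow, but then Lemma~\ref{lem:sextremal} gives $d(S)\le 4k+\alpha/2-2 < 4k+3$, a contradiction for $\alpha=14$; for $\alpha=18,20$ one uses instead that $\beta=8$ forces $d({{\calC}^{(1)}}^\perp)\ge 4k+4$ directly, so $d(C)=4k+4$). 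Then with $d(C)=4k+4$, the shadow lies in $C_0^\perp\supseteq{\calC^{(1)}}^\perp$ so $d(S)\ge d({{\calC}^{(1)}}^\perp)\ge 4k+3$ — but we need exactly the s-extremal bound $d(S)=n/2+4-2d(C)=4k+\alpha/2-4$, which for $\alpha=14,18,20$ is $4k+3,4k+5,4k+6$; Lemma~\ref{lem:sextremal} gives $d(S)\le$ that value, and the congruence of shadow weights mod $4$ together with the lower bound $d(S)\ge 4k+3$ (resp. $\ge 4k+4$) pins it down, establishing s-extremality.

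For (ii), $(\alpha,\beta)=(16,8)$: now $C$ may be doubly even. If $C$ is doubly even, then $C_0=C$, $C=C^\perp$, and $d(C)\le 4k+4$ with $d(C)\ge d({{\calC}^{(1)}}^\perp)\ge 4k+4$, so $d(C)=4k+4$ and $C$ is extremal. If $C$ is singly even, the argument of (i) applies verbatim to give an s-extremal code of minimum weight $4k+4$. For (iii), $(\alpha,\beta)=(22,8)$: here $n=24k+22\equiv 22\pmod{24}$, so the exceptional clauses of Lemma~\ref{lem:sextremal} and of the minimum-weight bound are in force — $d(C)\le 4k+6$, and $d({{\calC}^{(1)}}^\perp)\ge 4k+4$. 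Thus $d(C)\in\{4k+4,4k+5,4k+6\}$; I would rule out $4k+5$ by the same parity/shadow argument (if $d(C)=4k+5$, $C$ is singly even, $C_0$ has minimum weight $4k+6$, and the shadow, sitting in $C_0^\perp$ with $d(C_0^\perp)\ge 4k+4$, is forced by Lemma~\ref{lem:sextremal} into $d(S)\le n/2+4-2(4k+5)=4k+1 < 4k+4$, contradiction), leaving $d(C)\in\{4k+4,4k+6\}$, and in each subcase the shadow bound plus the lower bound $d(S)\ge 4k+4$ (from $S\subseteq C_0^\perp$) forces s-extremality with the stated minimum weight. The main obstacle I anticipate is the bookkeeping in excluding the "odd" minimum weights $d(C)=4k+3$ (and $4k+5$ in case (iii)): one must be careful that the weight-$(4k+3)$ (resp. $(4k+5)$) codeword, if it existed, would have to lie in the shadow rather than in $C_0$, and then the s-extremal bound of Lemma~\ref{lem:sextremal} (with the correct exceptional clause for $n\equiv 22\pmod{24}$) contradicts the lower bound $d(S)\ge d({{\calC}^{(1)}}^\perp)$; once that case analysis is set up cleanly it is routine, but it is where an off-by-one error would creep in.
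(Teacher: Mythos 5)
Your overall route is the intended one (the paper omits this proof, saying it is the same argument as for the preceding proposition): from Lemma~\ref{lem1} get $d({{\calC}^{(1)}}^\perp)\ge 4k+\beta/2$, pin down $d(C)$ with the Mallows--Sloane/Rains bound, and then sandwich the shadow weight $d(S)$ between the Bachoc--Gaborit bound of Lemma~\ref{lem:sextremal} and the lower bound coming from $S\subseteq C_0^\perp\subseteq {{\calC}^{(1)}}^\perp$, using the mod~$4$ congruence of shadow weights from~\cite{C-S}. Your main-line numbers are correct: $d(C)=4k+4$ in (i) and (ii), $d(C)\in\{4k+4,4k+6\}$ in (iii), and target shadow weights $4k+3,\,4k+5,\,4k+6,\,4k+4,\,4k+7$. (One small point you leave implicit: in (i) and (iii) the code $C$ is automatically singly even, since $24k+\alpha\not\equiv 0\pmod 8$, so the shadow is defined; this is exactly why (ii) needs the doubly even subcase.)

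There are, however, two concrete defects. First, your exclusion of the odd values $d(C)=4k+3$ (case (i), $\alpha=14$) and $d(C)=4k+5$ (case (iii)) is both unnecessary and, as written, wrong: these values are impossible for the trivial reason that every codeword of a binary self-dual (hence self-orthogonal) code has even weight. Your substitute arguments do not hold up: for $\alpha=14$ you claim Lemma~\ref{lem:sextremal} with $d(C)=4k+3$ gives $d(S)\le 4k+\alpha/2-2<4k+3$, but $4k+5<4k+3$ is false; for $\alpha=22$ you compute $n/2+4-2(4k+5)=4k+1$, whereas the correct value is $12k+15-(8k+10)=4k+5$, so no contradiction arises that way; and a putative minimum-weight codeword of $C$ cannot ``lie in the shadow,'' since $S\cap C=\emptyset$. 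Second, in (iii) with $d(C)=4k+4$ the sandwich $4k+4\le d(S)\le 4k+7$ does not by itself force $d(S)=4k+7$: you must invoke the shadow-weight congruence (here weights in $S$ are $\equiv 3\pmod 4$ because $n\equiv 6\pmod 8$, i.e.\ $\equiv n/2\pmod 4$) to exclude $4k+4,4k+5,4k+6$ --- you use this congruence in (i) but drop it precisely where it is indispensable in (iii). With these repairs (even weights to dispose of the odd cases, and the explicit congruence classes $3,1,2,0,3\pmod 4$ for $\alpha=14,18,20,16,22$), your argument is complete and coincides with the paper's.
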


%
%

\bigskip
\noindent {\bf Acknowledgment.}
This work is supported by JSPS KAKENHI Grant Number 23340021.


\begin{figure}[p]
\centering
{\footnotesize
\begin{tabular}{ll}
$M_{1}$:&
301203221111 131321121202 031330112300 023333033010 020111103321 301010131221\\&
313322212031 330331002332 213211120231 320120311112 230012013313 132223130321,\\
$M_{2}$:&
123021003313 313321321222 231310132322 001313013232 002331103321 321012113201\\&
133322232033 330113222132 211011322231 102122333130 010212011313 110223112301,\\
$M_{3}$:&
123023021313 331121101022 011132310120 221313213030 220111121323 323232313203\\&
113302210033 330331220112 011011302231 120100113310 012012231311 130223110303,\\
$M_{4}$:&
323023003133 131321121222 213310130302 003333033212 002331303303 303230313023\\&
311320032013 110333200132 213033122231 100302111312 212012213311 112203310303,\\
$M_{5}$:&
103203003333 333321101000 031130132300 203333211212 220111303323 303230333001\\&
111320012031 330113002130 013033320033 122122133332 032232013311 132023112101,\\
$M_{6}$:&
101201201331 333321101000 031130132300 203333211210 222113101321 301232131003\\&
113322210033 330113002132 011031122031 122122133332 030230211313 130021310101,\\
$M_{7}$:&
103223003113 131121301220 013132130320 021311031012 200311123323 301032111001\\&
331102210233 332333002130 233213320211 322100131110 032210233333 310021332123,\\
$M_{8}$:&
321223001111 131101103222 013112332322 023311231032 222113123121 301032111001\\&
113320012211 130311022110 011211302231 320122111312 210210233133 132203310301,\\
$M_{9}$:&
123223001111 113121103220 211332312302 023111213212 002331301321 323030313223\\&
131120032011 310311002110 033013120231 122100331330 012212011133 330003332103,\\
$M_{10}$:&
321023201133 111103103202 231112312120 223133031212 002133323323 303012311223\\&
311122230011 132131202112 213013300031 302300133330 210010231311 332221330121,\\
$M_{11}$:&
121021023131 313321321022 213332132100 003311211210 200331101121 121030311021\\&
311102030211 330331220330 031213302031 122100113130 030230033331 112201110323,\\
$M_{12}$:&
123223203113 111121121022 033112132322 203133213012 020313321103 321012111223\\&
133120032231 332113000130 213033120031 302100313132 232032231333 132021312303,\\
$M_{13}$:&
103203203333 331321103222 011332312300 003311213232 020313101121 303232111003\\&
311120232033 330311220130 031031122033 120100313110 212010013113 312223312321,\\
$M_{14}$:&
121201001313 111301123020 233130330302 023131031230 222131123101 101210311003\\&
133320010231 330113022132 231231302231 102300313312 212032231331 130021312303,\\
$M_{15}$:&
323023203111 313103321200 031112110302 003133013210 202111103103 323212133201\\&
331322230233 310331002312 033231100213 320322131310 010212013113 312201110123,\\
$M_{16}$:&
103021201131 113323103002 031332110300 201113231210 200311321323 101232333201\\&
333300012233 112311002312 211013322233 122102311312 212030233311 130201312101,\\
$M_{17}$:&
121223023111 333301101022 211310330302 203113031032 022113301303 303030311201\\&
311102010013 132333222332 231033122213 122120133332 232012213111 130201112321,\\
$M_{18}$:&
301221023133 333101321222 211332130320 223133231010 222333121301 103232333023\\&
131322032031 112131022312 011211302231 102120313112 012012031113 330223312101,\\
$M_{19}$:&
101203003133 311103121200 231132112102 223333033010 202131323103 323032313023\\&
333320010211 330131200312 231033322011 300322333132 030232011331 110023330301,\\
$M_{20}$:&
323021021133 313301321020 033330112102 023331231012 222111301321 101210311003\\&
311102230033 330133222332 213213122211 302120113110 212012011313 132203132103,
\end{tabular}
\caption{New self-dual $\ZZ_4$-codes of length $24$ and $d_L=10$}
\label{Fig:24}
}
\end{figure}

\setcounter{figure}{0}
\begin{figure}[p]
\centering
{\footnotesize
\begin{tabular}{ll}
$M_{21}$:&
323203201311 133303121000 011112330120 003113213230 222111301321 123212313023\\&
113320232231 330331020110 211211102011 122100111332 030210033311 312201110301,\\
$M_{22}$:&
103023023311 131323103200 213132112102 201133011210 000111301321 301212113021\\&
113122010213 112113220130 031211300233 320120113112 032232013333 310203332101,\\
$M_{23}$:&
323221023131 111121303202 213132110120 221131233032 220111321321 123012113221\\&
311302212211 112131002130 031233120031 302102131110 230230213131 130201330103,\\
$M_{24}$:&
301001221131 133123321000 231110112300 023131213230 000313321321 103030311203\\&
331322032011 312331020312 011033120211 120100311312 230030031131 130201332321,\\
$M_{25}$:&
103223023131 331301123000 233312332322 021331013230 200331323101 101232331003\\&
111300012031 312331222310 213211302211 302300133112 010230231111 130201330321,\\
$M_{26}$:&
103221223133 313123321020 033132132120 021133013032 000331323303 321230131003\\&
331300210213 312113022112 211033102011 300300131312 030010211333 112023312101,\\
$M_{27}$:&
101003201311 313301321000 233132312102 203111013212 022133103123 121010311003\\&
111102012033 112331202110 011213120031 122320113330 012010211111 110021132321,\\
$M_{28}$:&
123023003333 331101101222 233130112320 201313233212 222111321301 323030331203\\&
333102232011 310113202110 213231120033 102102331332 010212033313 310021112123,\\
$M_{29}$:&
323203223111 131321303000 033312332322 023331031212 020111321321 121012331001\\&
311102010013 330113020312 031013122033 120100313110 230010031111 112001132103,\\
$M_{30}$:&
123201023133 113323301220 013130312322 201313233212 020333303301 301012133003\\&
311122012031 312331222132 031013320031 100300331312 032010233333 130223130321,\\
$M_{31}$:&
121223223331 131321323022 213332112322 221113213012 220133303323 303032333221\\&
133302012031 330333020132 011213320033 322302313330 210232233311 110021130103,\\
$M_{32}$:&
321201221113 333323321222 031312130320 003131033212 200113323321 101010333223\\&
113102012213 312331000130 031031322213 322100331132 012212031333 310223110121,\\
$M_{33}$:&
323201201133 313323101022 033330130302 203131013032 002133121323 123210311021\\&
313122232231 332111000110 033013322011 320320131310 030012031313 132201332321,\\
$M_{34}$:&
321023023131 111303303022 211112332120 021333231212 000311323123 103212333021\\&
131302010211 312333022332 011213322033 320122311110 210032213313 110221312301,\\
$M_{35}$:&
101023021333 133123301200 213332330322 201133231230 202333101301 323210311003\\&
131100032033 310131200112 011233320013 120300333110 230210231313 312001110321,\\
$M_{36}$:&
103023201133 113323321020 211312312120 221113231212 022111101121 303230133223\\&
133300012031 130313022310 213013120233 322320313112 210210213113 332001130103,\\
$M_{37}$:&
123003001313 331121303200 031312310322 203311033012 020131321323 323210111221\\&
333302210011 132131222112 213033120013 322102333112 210012231131 330023310101,\\
$M_{38}$:&
301201001311 131103323000 031332332322 201333233032 020113301323 123232133001\\&
331300230233 112333200130 011213102213 102320331312 032232031113 330221330323,\\
$M_{39}$:&
103023203311 311103121022 011312112322 001313013230 202131303123 123232331001\\&
111322030011 110313200110 033211120233 320322311130 230032231113 332223112301,\\
$M_{40}$:&
303223021311 331321103202 213332332302 021131233032 202333123303 301030331021\\&
113300010211 130333202332 211031120231 122120333312 210230233311 130221110321,\\
\end{tabular}
\caption{New self-dual $\ZZ_4$-codes of length $24$ and $d_L=10$ (continued)}
}
\end{figure}

\setcounter{figure}{0}
\begin{figure}[thb]
\centering
{\footnotesize
\begin{tabular}{ll}
$M_{41}$:&
101001021133 333323301222 031110132120 201333213012 020331121323 103012113201\\&
133120010033 330311020312 211213100213 122322113332 210230031313 332001132301,\\
$M_{42}$:&
323223023111 313121121202 011110130302 203111033232 020133123321 123012313203\\&
311102212231 130333202332 213011320033 322102113110 232030011133 332021330101,\\
$M_{43}$:&
321021201333 111321301020 011130330322 223113013230 202113323321 123012311221\\&
131320232013 310131020330 233231122231 302322133110 232010011333 112223312101,\\
$M_{44}$:&
121201221133 331103321002 211110312322 223131033010 002333303123 303210133221\\&
133120212233 310113022110 031011102013 122122111110 032032211133 132021130121,\\
$M_{45}$:&
323201023311 333101301002 213330332300 003311013230 222133303323 303210311203\\&
311102010013 130311222112 231031100233 120120331112 210210231333 312221112301,\\
$M_{46}$:&
321221201333 333303123000 011130332322 201333233032 000111103321 123230113203\\&
113302030031 312313020130 011213122031 102300313130 210012031333 110203332301,\\
$M_{47}$:&
321001023313 111123103222 011312312100 201133011010 202313301321 101030113201\\&
313100232013 312311220112 211231322011 102100111112 210010211133 132201110321,\\
$M_{48}$:&
103023223311 333103101002 233112332122 003311011212 200131301101 323012133201\\&
113322032031 112333202132 213213302213 300122133310 012230211333 132023132103,\\
$M_{49}$:&
323021223333 111123103222 031110310120 223131231030 222333323301 321232331021\\&
113322230213 112313022330 031033120013 120102311130 032232031333 310203332323,\\
$M_{50}$:&
121223003313 131301303020 213332132102 023131231232 022111123123 123012113001\\&
331120012233 310331220132 211013320011 100120313110 230212011113 310023130123,\\
$M_{51}$:&
103023223113 113123303222 233130310102 023131231230 000131303321 103212313003\\&
131300012011 312111000132 231033102213 320120111132 012030013313 310221312303,\\
$M_{52}$:&
103221003331 113323103220 213112132102 223311011210 002111101121 321210131001\\&
111320010013 330133222112 013013302231 122122113130 012010031311 310201110321,\\
$M_{53}$:&
303221221313 313303303022 033330130302 023311233212 002311123323 121012133001\\&
333122230033 110131202130 211211122033 300322333312 030032031113 312001332321,\\
$M_{54}$:&
103023221133 313103301202 011312330302 023331013210 220331103123 121210111201\\&
313122030031 132111220312 013213320231 120100313110 010032011113 310001332321,\\
$M_{55}$:&
303023003313 111301123002 233110310320 201111233010 202133121303 323010131003\\&
311300210033 110111200330 211033102211 120302333112 212232213331 110201310123,\\
$M_{56}$:&
103023203113 133123123222 031310330122 001133231030 002111303323 123012333221\\&
333100232233 110113202132 233011320013 320102113332 010210231333 330003312101,\\
$M_{57}$:&
321003003131 311323123002 211310112320 223333233210 222133123321 103230113203\\&
313300210031 132333222112 031033122031 122120313332 030212233333 112203132303 
\end{tabular}
\caption{New self-dual $\ZZ_4$-codes of length $24$ and $d_L=10$ (continued)}
}
\end{figure}

\end{document}